\numberwithin{equation}{section}
\theoremstyle{definition}
\newtheorem{defi}{Definition}[section]
\theoremstyle{plain}
\newtheorem{prop}[defi]{Proposition}
\newtheorem{teo}[defi]{Theorem}
\newtheorem{lema}[defi]{Lemma}
\theoremstyle{remark}
\newcommand{\R}{\mathbf{R}}
\newcommand{\RN}{\mathbf{R}^N}
\newcommand{\N}{\mathbf{N}}
\newcommand{\rd}{\mathrm{d}}
\newcommand{\cM}{\mathcal{M}_{\lambda,\Lambda}}
\newcommand{\cS}{\mathcal{S}}
\begin{document}

\title[]{Existence and nonexistence of positive solutions to some fully nonlinear equation 
in one dimension}

\author[]{Patricio Felmer}
\address{
Patricio Felmer - 
Departamento de Ingenier\'\i a Matem\'atica and CMM (UMI 2807 CNRS), Universidad de Chile, Casilla 170 Correo 3, 
Santiago, Chile
\newline {\tt pfelmer@dim.uchile.cl}.
}

\author[]{Norihisa Ikoma}
\address{
Norihisa Ikoma - 
Faculty of Mathematics and Physics, 
Institute of Science and Engineering, 
Kanazawa University, 
Kakuma, Kanazawa, Ishikawa 9201192, JAPAN
\newline {\tt ikoma@se.kanazawa-u.ac.jp}.
}

\date{\today}

\begin{abstract} 
In this paper, we consider the existence (and nonexistence) of solutions to 
	\[
		-\cM^\pm (u'') + V(x) u = f(u) \quad {\rm in} \ \R
	\]
where $\cM^+$ and $\cM^-$ denote the Pucci operators with $0< \lambda \leq \Lambda < \infty$, 
$V(x)$ is a bounded function, 
$f(s)$ is a continuous function and its typical example is 
a power-type nonlinearity $f(s) =|s|^{p-1}s$ $(p>1)$. 
In particular, we are interested in positive solutions which decay at infinity, 
and the existence (and nonexistence) of such solutions is proved. 
\end{abstract}
\keywords{}
\subjclass[2010]{}
\maketitle

\section{Introduction}\label{section:1}

In this paper, we study the existence and nonexistence of solutions to the following nonlinear differential equations 
	\begin{equation}\label{eq:1.1}
		-\cM^\pm (u'') + V(x) u = f(u) \quad {\rm in} \ \R, \quad 
		u > 0 \quad {\rm in} \ \R, \quad 
		\lim_{|x| \to \infty }u (x) = 0.
	\end{equation}
Here $V$ and $f$ are given functions, $0<\lambda \leq \Lambda < \infty$ constants and 
$\cM^\pm(s)$ the Pucci operators defined by 
	\[
		\cM^+(s) := 
		\left\{\begin{aligned}
			& \Lambda s & &{\rm if} \ s \geq 0,\\
			& \lambda s & &{\rm if} \ s < 0,
		\end{aligned}\right.
		\quad 
		\cM^-(s) := 
		\left\{\begin{aligned}
			& \lambda s & &{\rm if} \ s \geq 0,\\
			& \Lambda s & &{\rm if} \ s < 0.
		\end{aligned}\right.
	\]
We remark that when $\lambda = \Lambda$, one has $\cM^\pm(u'') = \lambda u''$.

	One of motivations to study equations like \eqref{eq:1.1} 
is to see to what extent the properties and the results in the semilinear case 
can be generalized to the fully nonlinear case. 
When $\lambda = \Lambda$, 
\eqref{eq:1.1} 
is well studied and it is proved that 
\eqref{eq:1.1} has a solution for various $V(x)$ and $f(s)$ 
by critical point theory. Here we refer to \cite{St-08,W-96} and references therein.

	On the other hand, when $\lambda \neq \Lambda$, 
\eqref{eq:1.1} 
is not studied well. 
In \cite{FQ-04}, instead of \eqref{eq:1.1}, 
the authors study 
the existence of positive radial solutions of 
	\begin{equation}\label{eq:1.3}
		-\cM^\pm ( D^2 u) + \gamma u = f(u) \quad {\rm in} \ B_R(0) \subset \RN, \quad 
		u = 0 \quad {\rm on} \ \partial B_R(0)
	\end{equation}
as well as 
	\[
		-\cM^\pm ( D^2 u) + u = u^p \quad {\rm in} \ \RN.
	\]
Here $N \geq 3$, $0 \leq \gamma$ and $1 < p < p_\ast^\pm$ where 
$p_\ast^\pm$ are critical exponents for $\cM^\pm$ 
(see also \cite{AS-11,CL-00,FQ-02,FQ-03}). 
Recently, in \cite{GLP-17}, the authors show the existence of 
infinitely many radial solutions of \eqref{eq:1.3} when $\gamma = 0$ and 
$f(s) = |s|^{p-1} s$. Moreover, in \cite{GLP-17}, the inhomogeneous case is also
considered and the existence of infinitely many solutions is shown 
on a bounded annulus.

	In this paper, we aim to treat the inhomogeneous equation on the unbounded domain $\R$. 
We emphasis that in general the existence of solutions to \eqref{eq:1.1} 
is 
delicate when the equation is inhomogeneous and the domain is unbounded. 
Indeed, we shall prove the nonexistence result when $V(x)$ is monotone. 
See Theorem \ref{defi:1.3} below.

	We first deal with the existence result. 
For $V(x)$, we assume 
	\begin{enumerate}
		\item[(V1)] $V \in W^{1,\infty}(\R)$ and $0 < \inf_{\R} V =: V_0$. 
		\item[{(V2)}] 
		{For a.a. $x \in (-\infty,0)$ and a.a. $y \in (0,\infty)$, 
			$V'(x) \leq 0 \leq V'(y)$}.
		\item[{(V3)}] 
		$V(0) \leq  V_\infty := \lim_{|x|\to \infty} V(x)$ and there exist 
			$C_0,\xi_0 > 0$ such that 
				\[
					\begin{aligned}
						&\text{(for $\cM^+$)} 
						& & (0 \leq) V_\infty  - V(x) 
						\leq C_0 \exp \left( -2 \sqrt{\frac{V_\infty}{\Lambda} + \xi_0 } |x| 
						\right)
						& &{\rm for\ all}\ x \in \R,
						\\
						&\text{(for $\cM^-$)} 
						& & (0 \leq) V_\infty  - V(x) 
						\leq C_0 \exp \left( -2 \sqrt{\frac{V_\infty}{\lambda} + \xi_0 } |x| 
						\right)
						& &{\rm for\ all}\ x \in \R.
					\end{aligned}
				\]
	\end{enumerate}

	Next, for $f(s)$, we suppose the following conditions 
and an example of $f(s)$ is $f(s) = \sum_{i=1}^k a_i s^{p_i}$ 
where $0 < a_i$ and $1 < p_i$: 
	\begin{enumerate}
		\item[(f1)] $f \in C^1(\R)$ and $f(s) = 0 $ for all $s \leq 0$. 
		\item[(f2)] There exists an $\eta_0 > 0$ such that 
		$ \lim_{s \to 0} s^{-1 - \eta_0} f(s) = 0$. 
		\item[(f3)] As $s \to \infty$, 
				\[
					 \frac{f(s)}{s} \to \infty \quad 
					 {\rm and} \quad 
					\frac{f(\theta s)}{f(s)} \to \bar{f} (\theta) \quad 
					{\rm in} \ C_{\rm loc} ((0,1]). 
				\]
		\item[{(f4)}] 
			$ s \mapsto s^{-1} f(s): (0,\infty) \to \R$ is strictly increasing. 
	\end{enumerate}

\medskip

\noindent
{\bf Remark 1.1}	
{\it (i) In (f3), it follows that 
$\bar{f} \in C((0,1])$, $\bar{f}(1) = 1$ and $\bar{f}(\theta) \geq 0$ for $\theta \in (0,1]$. 
For example, when $f(s) = s^p$ and $f(s) = s \log s$, one sees 
$\bar{f}(\theta) = \theta^p$ and $\bar{f} (\theta) = \theta$ respectively.

(ii) When $\lambda = \Lambda$, 
condition (f4) is used to obtain bounded Palais-Smale sequences. 
The classical  condition to obtain bounded Palais-Smale sequences 
is the Ambrosetti-Rabinowitz condition: 
$0 < \mu \int_0^s f(t) \rd t \leq f(s) s$ for some $\mu > 2$ and all $s >0$. 
We remark that (f1)--(f4) do not imply this condition. 
In fact, consider a function defined by 
	\[
		f(s) = \eta(s) s^p + (1-\eta(s)) C s \log s
	\]
where $1<p$, $\eta \in C^\infty([0,\infty),\R)$, $\eta'(s) \leq 0$ for every $s \in [0,\infty)$, 
$\eta(s) = 1$ if $0 \leq s \leq 2$, $\eta(s) = 0$ if $3 \leq s$ and 
$C>0$ is chosen so that $ C \log s \geq s^{p-1} $ in $[2,3]$. 
It is easily seen that $f$ satisfies (f1)--(f4) with $\bar{f}(\theta) = \theta$ 
and that
$F(s)$ has the growth $s^2 \log(s)$ as $s \to \infty$, 
providing the required counterexample.

}	

\medskip

Under these conditions, we have 

\begin{teo}\label{defi:1.2}
	Under \emph{(V1)--(V3)} and \emph{(f1)--(f4)}, 
	\eqref{eq:1.1} 
	have a solution. 
\end{teo}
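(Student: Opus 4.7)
The plan is to obtain the positive decaying solution as a limit of solutions of Dirichlet problems on bounded intervals $(-R,R)$, and to prevent loss of compactness as $R\to\infty$ by comparing with a ground state of the autonomous limit equation, following the strategy familiar from the semilinear case. The non-variational nature of $\cM^\pm$ will be handled either by exploiting the 1D ODE structure or by restricting the variational framework to the class of one-bump positive functions where the concave/convex partition is intrinsic.

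First I would analyze the autonomous limit equation $-\cM^\pm(U'') + V_\infty U = f(U)$ in $\R$. On each interval of monotonicity of $U'$ the sign of $U''$ is definite, so the equation linearizes in its principal part into $-\lambda U'' + V_\infty U = f(U)$ or $-\Lambda U'' + V_\infty U = f(U)$, each of which admits a classical Hamiltonian first integral. Shooting from the maximum $M$ through the concave regime and matching at the inflection to the convex tail produces, under (f1)--(f4), a positive even single-bump ground state $U_\infty$ whose exponential decay rate is precisely $\sqrt{V_\infty/\Lambda}$ for $\cM^+$ (resp.\ $\sqrt{V_\infty/\lambda}$ for $\cM^-$), matching the threshold in (V3). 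To $U_\infty$ I attach a canonical ``mountain-pass level'' $c_\infty$, defined via a Nehari-type functional on positive one-bump profiles (for which an intrinsic energy is available); assumption (f4) plays the role of Ambrosetti--Rabinowitz in giving both the correct geometry and boundedness of minimizing sequences.

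Next, for each $R>0$ I would produce a positive solution $u_R$ of the Dirichlet problem $-\cM^\pm(u_R'') + V u_R = f(u_R)$ on $(-R,R)$ at an analogous level $c_R$, either by minimizing the same functional on its Nehari manifold or, equivalently, by ODE shooting using the first-integral structure on each monotonicity arc and matching at the inflection points. Growth conditions (f2)--(f3) together with standard Pucci regularity yield uniform $L^\infty$ and $C^{1,\alpha}$ bounds, so that a subsequence converges locally to some nonnegative $u$ solving \eqref{eq:1.1} classically in the interior. The heart of the argument is to establish the strict inequality $c_R < c_\infty$ for large $R$: using (V2) to center a translate of $U_\infty$ at the minimum point $x=0$ of $V$, the leading error in the comparison is controlled by
\[
\int_{\R} \bigl(V_\infty - V(x)\bigr)\, U_\infty(x)^2\,\rd x,
\]
which is strictly positive since $V(0)\le V_\infty$ and convergent precisely because the exponential tolerance in (V3) beats twice the decay rate of $U_\infty$ found above.

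Armed with $c<c_\infty$, a splitting / profile-decomposition argument rules out dichotomy: any nontrivial bump escaping to $\pm\infty$ would solve the autonomous limit problem and contribute at least $c_\infty$ to the level, contradicting $c<c_\infty$; hence $u\not\equiv 0$. Positivity follows from the strong maximum principle for the Pucci operators, and the decay $u(x)\to 0$ at infinity from comparing with an explicit exponential supersolution built from (V3). The chief obstacle is the absence of a true variational structure for $-\cM^\pm$: defining the levels $c_R$ and $c_\infty$ and the class on which they are attained is delicate, and the strict inequality $c_R<c_\infty$ --- the step where the precise exponential threshold in (V3) converts the heuristic ``$V<V_\infty$ helps'' into an honest gap between the levels --- is the technical core of the proof.
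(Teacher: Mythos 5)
Your route (Dirichlet approximations on $(-R,R)$ plus an energy comparison $c_R<c_\infty$ with the autonomous problem at infinity, concentration--compactness style) is genuinely different from the paper's, which is degree-theoretic: the paper works in the weighted space $X_{\eta_1}$, shows the solution operator is compact, and gets a nontrivial fixed point from a degree jump, the whole weight being carried by the a priori bound of Proposition \ref{defi:2.9}. The problem is that your version has a genuine gap exactly at the point you yourself identify as the core. For the Pucci operator there is no functional whose critical points are solutions of $-\cM^\pm(u'')+Vu=f(u)$ in the form you use it: the ``intrinsic energy on positive one-bump profiles'' is never defined, and it cannot be defined in the way you suggest, because the concave/convex partition depends on the unknown $u$ itself, the class of one-bump positive functions is neither open nor weakly closed and is not preserved by minimizing or deformation procedures, and constrained ``critical points'' of such a restricted functional need not solve the equation. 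Consequently the levels $c_R$ and $c_\infty$, the Nehari manifold, the mountain-pass characterization of $U_\infty$, the strict inequality $c_R<c_\infty$, and the splitting argument that rules out vanishing/dichotomy all rest on objects that have not been constructed. The one known way to make a variational scheme rigorous here is the one-dimensional inversion $u''=(\cM^\pm)^{-1}\bigl(V(x)u-f(u)\bigr)$ (the observation the paper attributes to Evans and deliberately does not pursue), which converts the problem into a semilinear one with an $x$-dependent, piecewise-weighted nonlinearity; your sketch gestures at linearizing on monotonicity arcs but never performs this reduction, so the scheme as written does not get off the ground.

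Two secondary points. First, the uniform $L^\infty$ bound for the approximating solutions does not follow from ``growth conditions (f2)--(f3) plus Pucci regularity''; under (f1)--(f4) the Ambrosetti--Rabinowitz condition may fail (the paper's Remark 1.1(ii) gives $F(s)\sim s^2\log s$), and a bound must be extracted from the energy level or by a blow-up argument such as the paper's Lemma \ref{defi:2.8}, which uses (f3) and the structure of the limit equation $-\cM^\pm(v_0'')=\bar f(v_0)$. Second, the role you assign to (V3) is off: the integral $\int_{\R}(V_\infty-V)U_\infty^2\,\rd x$ converges for any bounded $V\le V_\infty$, so the exponential threshold is not what makes the comparison $c<c_\infty$ work (in a correctly set up variational argument only $0\le V_\infty-V\not\equiv0$ would be needed there); in the paper the precise rate $2\sqrt{V_\infty/\Lambda+\xi_0}$ (resp.\ with $\lambda$) is needed for a different purpose, namely to beat twice the decay rate of $\omega_\pm$ in the competing exponential estimates of Steps 2 and 6--7 of Proposition \ref{defi:2.9}, which is what forbids the maximum point from drifting to infinity in the degree-theoretic a priori bound.
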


	Next, we turn to the nonexistence result. 
In this case, we assume that $V(x)$ is monotone:
	\begin{enumerate}
		\item[{(V2')}] 
		{$V'(x) \geq 0$ in $\R$ and 
				\[
					\underline{V} = \lim_{x \to - \infty} V(x) 
					< \lim_{x \to \infty} V(x) = \overline{V}. 
				\]}
	\end{enumerate}

Then we have 

\begin{teo}\label{defi:1.3}
	Let $0 < \lambda \leq \Lambda< \infty$ and 
	assume \emph{(V1)}, \emph{(V2')}, \emph{(f1)}, \emph{(f4)} and 
		\begin{equation}\label{eq:1.4}
			\lim_{s\to 0} \frac{f(s)}{s} = 0. 
		\end{equation}
	Then \eqref{eq:1.1} 
	have no solution. 
\end{teo}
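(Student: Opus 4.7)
The plan is to argue by contradiction using a Pohozaev-type identity adapted to the Pucci operator. Suppose $u$ is a positive solution of \eqref{eq:1.1}. I would first show that $u \in C^{2}(\R)$ by standard Schauder regularity for Pucci operators with $C^{1}$ right-hand side, and then establish exponential decay of both $u$ and $u'$ at $\pm\infty$: by \eqref{eq:1.4} the equation linearizes to $-\mathcal{M}^\pm(v'') + V_\infty v = 0$ in the tails, forcing decay like $e^{-\sqrt{V_\infty / \Lambda}|x|}$ (resp.~$\sqrt{V_\infty/\lambda}$ for $\mathcal{M}^-$). In particular, the equation forces $\mathcal{M}^\pm(u'') \approx Vu > 0$ near $\pm\infty$, so $u'' > 0$ there; hence $u''$ changes sign only on a compact subset of $\R$, at finitely many transition points $x_1 < \dots < x_m$.

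Next, I would multiply \eqref{eq:1.1} by $u'$ and integrate over $\R$. The terms $\int V u u'\,dx$ and $\int f(u)u'\,dx$ give $-\tfrac12\int V'u^{2}\,dx$ and $0$ respectively after integration by parts, using the decay at infinity. To handle $\int \mathcal{M}^\pm(u'')u'\,dx$, I would partition $\R$ into the maximal intervals of constant sign of $u''$; on each, $\mathcal{M}^\pm(u'')u'$ is a constant multiple ($\Lambda$ or $\lambda$) of $((u')^{2}/2)'$. Patching the primitives across the $x_j$ (where the multiplier jumps between $\Lambda$ and $\lambda$) produces a discrepancy and yields an identity of the shape
\[
\int_\R V'(x)\, u(x)^{2}\, dx \;=\; (\Lambda-\lambda)\sum_{j=1}^{m}\epsilon_j\, (u'(x_j))^{2},
\]
where $\epsilon_j \in \{+1,-1\}$ depends on the direction of the sign change of $u''$ at $x_j$. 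When $\Lambda=\lambda$ this reduces to $\int V'u^{2} = 0$, which immediately contradicts (V2') since $V' \geq 0$ is not identically zero and $u > 0$; the essence of the fully nonlinear proof is to defeat the Pucci correction on the right.

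To close the argument I would combine this global identity with local Pohozaev identities on each sub-interval of constant sign of $u''$: there the classical energy $-\tfrac{M}{2}(u')^{2}+\tfrac{V}{2}u^{2}-F(u)$ (with $M=\Lambda$ or $\lambda$) is nondecreasing by (V2'), and at each transition point the equation $u''(x_j)=0$ forces $V(x_j)=f(u(x_j))/u(x_j)$, which by (f4) uniquely determines $u(x_j)=g(V(x_j))$ with $g=(f(s)/s)^{-1}$. Writing the resulting bounds in terms of the function $\Phi(s) := \tfrac12 s f(s)-F(s)$, which is strictly increasing on $(0,\infty)$ by (f4) (since $\Phi'(s) = \tfrac12(sf'(s)-f(s)) > 0$), and using the strict monotonicity of $V$ at every $x_j$, should force the signed Pucci sum on the right to be strictly dominated by $\int V'u^{2}\,dx$, producing the contradiction.

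The main obstacle is precisely this last balancing step. In the semilinear case the Pohozaev identity is immediately incompatible with (V2'), but in the fully nonlinear Pucci case the correction term can a priori be of either sign, and defeating it demands careful simultaneous use of (f4), the local energy monotonicity, and the pinning of $u(x_j)$ by $V(x_j)$ at every transition point. If this direct Pohozaev route proves too delicate, I would fall back on a sliding/comparison argument: for $\tau > 0$ the shift $u_\tau(x) := u(x-\tau)$ is a supersolution of \eqref{eq:1.1} by (V2'), and a comparison principle for $-\mathcal{M}^\pm + (V - f(u)/u)$ (whose zeroth-order coefficient is large and positive at infinity by \eqref{eq:1.4}) combined with (f4) would give $u \leq u_\tau$ for all $\tau > 0$; letting $\tau \to +\infty$ then yields $u \leq 0$, contradicting $u > 0$.
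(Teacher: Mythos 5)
The global identity you derive is correct: multiplying \eqref{eq:1.1} by $u'$ and integrating gives $\int_\R \mathcal{M}^\pm(u'')u'\,\rd x = -\tfrac12\int_\R V'u^2\,\rd x$, and the left side reduces to a signed Pucci correction supported at sign changes of $u''$, so the case $\lambda=\Lambda$ is immediate. But the theorem's entire content is the case $\lambda<\Lambda$, and there your argument stops exactly where the proof has to begin: you yourself identify the "balancing step" as the main obstacle and only assert that local energy identities, (f4) and the pinning $V(x_j)=f(u(x_j))/u(x_j)$ "should" dominate the correction sum; no such estimate is carried out. In addition, the structural claim that $u''$ changes sign at finitely many points $x_1<\dots<x_m$ is unjustified: positivity of $u''$ in the tails only confines the sign changes to a compact set, not to a finite set, and your decomposition needs finiteness. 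The paper obtains the required structure before any energy argument: the moving-plane argument of Lemma \ref{defi:2.7} (which works under (f1), (f4), \eqref{eq:1.4}, (V2')) shows $u$ has a unique critical point $x_0$; then (f4) and the monotonicity of $V$ make $h(x)=V(x)-f(u(x))/u(x)$ strictly increasing on $[x_0,\infty)$, so $u''$ vanishes at exactly one point $z^\pm\ge x_0$ to the right of the maximum. The decisive device, which replaces your missing balancing step, is then to work with the energy $H_\pm(x)=\tfrac{M}{2}(u')^2+F(u)-\tfrac{V(z^\pm)}{2}u^2$ (coefficient frozen at $V(z^\pm)$, not $V(x)$ or $V_\infty$) and to compare the two monotone branches of $u$ at equal heights via the inverse functions $y^\pm(s),z^\pm(s)$ and the change of variables $s=u(x)$: one gets $H_\pm(z^\pm)\ge 0$ from monotonicity of $H_\pm$ on $(z^\pm,\infty)$, $H_\pm(z^\pm)\le H_\pm(y^\pm)$ from the branch comparison (this is where (f4) and $V$ nondecreasing absorb the troublesome region where $u''<0$ and the Pucci weight differs), and $H_\pm'\le 0$ on $(-\infty,y^\pm)$; since $\underline{V}<\overline{V}$ at least one inequality is strict, giving the contradiction. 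Without an argument of this kind your main route is a plausible plan, not a proof.

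The fallback sliding argument also fails as stated. The comparison principle for $-\mathcal{M}^\pm+(V-f(u)/u)$ is only available where the zeroth-order coefficient is nonnegative, i.e. in the tails; at the maximum point $x_0$ one has $V(x_0)\le f(u(x_0))/u(x_0)$ (since $u''(x_0)\le 0$), so the coefficient is nonpositive exactly in the core region, and the maximum principle gives nothing there. Moreover the sliding method has no admissible starting point: for every $\tau>0$ the desired inequality $u\le u_\tau$ already fails near $x_0$, where $u$ is at its maximum while $u_\tau(x_0)=u(x_0-\tau)\le u(x_0)$ with equality for all $\tau$ only if $u$ is constant on $(-\infty,x_0]$, which contradicts decay; in particular for large $\tau$ the inequality is violated by an amount of order $\max u$, so there is no large-$\tau$ regime from which to continue the slide. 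Hence neither the primary route nor the fallback closes the argument.
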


\medskip

\noindent
{\bf Remark 1.2} 
{\it	Theorem \ref{defi:1.3} still holds when we replace (V2') by 
		\[
			V'(x) \leq 0 \quad {\rm in}\ \R, \quad 
			\overline{V} = \lim_{x \to - \infty} > \lim_{x \to \infty} V(x) 
			= \underline{V}.
		\]
	}

\medskip

	Here we make some comments on the proofs of Theorems \ref{defi:1.2} and \ref{defi:1.3}. 
First, even though equation \eqref{eq:1.1} can be transformed into an equation 
with variational structure (pointed by Professor Evans), 
we prefer to use degree theoretic arguments in view of future applications.  
For Theorem \ref{defi:1.2}, we borrow the idea in \cite{dFLN-82} (cf. \cite{FQ-04}). 
More precisely, we will find a suitable function space $X$ which is a Banach space, 
and rewrite \eqref{eq:1.1} 
into the equations 
$( {\rm id} - \mathcal{L}^\pm ) (u) = 0$ 
where ${\mathcal L}^\pm (u) := (-\cM^\pm + V(x))^{-1} f( u(x) )$ for $u \in X$. 
To find a solution $u \neq 0$, we use the Leray--Schauder degree ${\rm deg}_X$ in $X$ 
and prove that 
\begin{enumerate}
	\item[i)] 
		There exists an $r_0>0$ such that 
		${\rm deg}_{X}({\rm id}-{\mathcal L}^\pm, B_{r_0}(0), 0)=1$.
	\item[ii)] 
		There exists an $r_1>r_0$ such that 
		${\rm deg}_{X}({\rm id} - {\mathcal L}^\pm, B_{r_1}(0), 0)=0$.
\end{enumerate}
From i) and ii), we have ${\rm deg}_X( {\rm id} - \mathcal{L}^\pm , A_{r_0,r_1}, 0 ) \neq 0$ 
and find a $u_0 \in A_{r_1,r_2}$ so that $( {\rm id} - \mathcal{L}^\pm ) (u_0) = 0$ 
where $A_{r_1,r_2} := \{ u \in X \ |\  r_1 < \|u\|_X < r_2 \}$. 
One of difficulties here is to find a suitable $X$ in order that 
we can prove the property ii) as well as 
the map $\mathcal{L}^\pm : X \to X$ is compact. 
A key for proving 
 ii) is a priori estimates of solutions in $X$. 
Since we treat the unbounded domain, 
we need the uniform decay estimates of solutions as well as the uniform $L^\infty$-bounds. 
This point is different from the bounded domain case and requires 
delicate arguments. 
For instance, see Proposition \ref{defi:2.9} below.

	We also point out that the argument of Proposition \ref{defi:2.9} is useful 
to show the nonexistence result namely, Theorem \ref{defi:1.3}. 
Indeed, this case is simpler than Proposition \ref{defi:2.9} and 
we will prove Theorem \ref{defi:1.3} in section \ref{section:3}.

	In Appendix \ref{section:A}, 
we consider \eqref{eq:1.1} 
 in the special case when $V(x) \equiv {\rm const.} > 0$. 
In this case, we can prove the unique existence of solutions 
up to translations. See Proposition \ref{defi:2.1} and Appendix \ref{section:A}.


\section{Proof of Theorem \ref{defi:1.2}}
\label{section:2}

Throughout this section, we always assume (f1)--(f4) and 
(V1)--(V3). 
We begin with the existence result when $V(x) \equiv {\rm const.} > 0$.

\begin{prop}\label{defi:2.1}
Under \emph{(f1)-(f4)}, the equations
	\begin{equation}\label{eq:2.1}
		 \left\{\begin{aligned}
		& -\cM^+ (u'') + V_\infty u = f(u) \quad {\rm in} \ \R, 
		\quad u > 0 \ {\rm in} \ \R, 
		\\
		& u(x) \to 0 \ {\rm as} \ |x| \to \infty, \quad 
		u(0) = \max_{x \in \R} u(x)
		\end{aligned}\right.
	\end{equation}
and 
	\begin{equation}\label{eq:2.2}
		 \left\{\begin{aligned}
			& -\cM^- (u'') + V_\infty u = f(u) \quad {\rm in} \ \R, 
			\quad u > 0 \ {\rm in} \ \R, 
			\\
			& u(x) \to 0 \ {\rm as} \ |x| \to \infty, \quad 
			u(0) = \max_{x \in \R} u(x)
			\end{aligned}\right.
		\end{equation}
	have unique solutions $\omega_+$ and $\omega_-$. 
	Furthermore, there exist $z^\pm > 0$,  $c_1>0$ and $c_2>0$ such that 
		\[
			\begin{aligned}
				&\omega_\pm'' (x) < 0 = \omega_\pm''(z^\pm) < \omega_\pm'' (y) 
				& &\text{\emph{for every $x,y \in \R$ with $|x| < z^\pm < |y|$}},\\
				&
				\omega_\pm(x) \le c_1\exp(-c_2 |x|)
				& &{\rm for \ all} \ x \in \R.
			\end{aligned}
		\]
	Finally, if $u$ satisfies 
		\[
			\begin{aligned}
				&\cM^\pm(u'') + V_\infty u = f(u) \quad {\rm in} \ \R, 
				\quad \ u > 0 \ {\rm in}\ \R,
				\\
				& u(0) = \max_{\R} u, \quad  
				u(x) \to 0 \ {\rm if}\ x \to \infty \ {\rm or} \ x \to -\infty,
			\end{aligned}
		\]
	then $u = \omega_\pm$. 
\end{prop}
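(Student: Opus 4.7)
The plan is to reduce \eqref{eq:2.1} and \eqref{eq:2.2} to autonomous second-order ODEs with continuous right-hand sides, and to analyse them through a first-integral / phase-plane argument. By (f4) together with the limits from (f2) and (f3), the function $s\mapsto f(s)/s$ strictly increases from $0$ to $\infty$ on $(0,\infty)$, so there is a unique $\beta>0$ with $f(\beta)=V_\infty\beta$; moreover $V_\infty t-f(t)>0$ on $(0,\beta)$ and $<0$ on $(\beta,\infty)$. Inverting $\cM^+$ shows that any non-negative solution of \eqref{eq:2.1} satisfies the autonomous equation $u''=h_+(u)$, where
\[
h_+(t):=\begin{cases}(V_\infty t-f(t))/\Lambda,&0\le t\le\beta,\\ (V_\infty t-f(t))/\lambda,&t>\beta,\end{cases}
\]
and similarly \eqref{eq:2.2} reduces to $u''=h_-(u)$ with $\lambda,\Lambda$ swapped. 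Both branches of $h_\pm$ vanish at $\beta$, so $h_\pm$ is continuous and locally Lipschitz on $[0,\infty)$.

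For existence I would set $\phi(t):=\tfrac{V_\infty}{2}t^2-F(t)$; then $\phi$ is strictly increasing on $[0,\beta]$, strictly decreasing on $[\beta,\infty)$, $\phi(0)=0$, $\phi(\beta)>0$, and $\phi(t)\to-\infty$ as $t\to\infty$ by (f3). Integrating $h_+$ produces the first integral $\tfrac12(u')^2=H_+(u)$, with
\[
H_+(u)=\frac{\phi(u)}{\Lambda}\ \text{on }[0,\beta],\qquad H_+(u)=\frac{\phi(u)}{\lambda}+\Big(\frac{1}{\Lambda}-\frac{1}{\lambda}\Big)\phi(\beta)\ \text{on }[\beta,\infty).
\]
The decay condition forces the energy constant to vanish, and the requirement $u(0)=\max u$ (so $u'(0)=0$) then reads $H_+(\alpha)=0$ for $\alpha:=u(0)$, equivalently
\[
\phi(\alpha)=(1-\lambda/\Lambda)\,\phi(\beta)\in[0,\phi(\beta)),
\]
which by strict monotonicity of $\phi$ on $[\beta,\infty)$ uniquely determines $\alpha\in(\beta,\infty)$. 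I would then integrate the separable equation $u'=-\sqrt{2H_+(u)}$ starting from $u(0)=\alpha$: since $H_+>0$ on $(0,\alpha)$ with a square-root zero at $\alpha$, and by (f2) $H_+(u)\sim V_\infty u^2/(2\Lambda)$ as $u\to 0^+$, the integrand $1/\sqrt{2H_+(u)}$ is locally integrable on $(0,\alpha]$ but non-integrable at $0$. Hence $u$ strictly decreases on $[0,\infty)$, crosses $\beta$ at a unique $z^+>0$, and reaches $0$ only as $x\to\infty$. Setting $\omega_+(-x):=\omega_+(x)$ then yields $\omega_+\in C^2(\R)$; the construction of $\omega_-$ is identical after swapping $\lambda\leftrightarrow\Lambda$.

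The sign statement for $\omega_\pm''$ is immediate from $\omega_\pm''(x)=h_\pm(\omega_\pm(x))$: $h_\pm$ is strictly negative on $(\beta,\alpha]$, zero at $\beta$ and strictly positive on $(0,\beta)$, while $\omega_\pm$ strictly decreases from $\alpha$ to $0$ on $[0,\infty)$, so $z^\pm$ is the unique point where $\omega_\pm=\beta$. For the exponential decay, on $\{|x|\ge z^+\}$ the relation $\tfrac{\Lambda}{2}(\omega_+')^2=\phi(\omega_+)\le\tfrac{V_\infty}{2}\omega_+^2$ gives $|\omega_+'|\le\sqrt{V_\infty/\Lambda}\,\omega_+$, and integration yields $\omega_+(x)\le c_1\exp(-\sqrt{V_\infty/\Lambda}\,|x|)$ (the analogous rate for $\omega_-$ is $\sqrt{V_\infty/\lambda}$). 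The final uniqueness assertion follows at once: any admissible $u$ must satisfy $u'(0)=0$ and $u''=h_\pm(u)$ on each half-line, and the decay at $\pm\infty$ pins the energy constant to $0$, forcing $u(0)=\alpha$; local Lipschitz uniqueness for $u''=h_\pm(u)$ then identifies $u$ with $\omega_\pm$.

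The main technical care is at the matching value $t=\beta$, where $h_\pm$ is merely Lipschitz (generically not $C^1$): I need to verify that the trajectory crosses $u=\beta$ transversally, which is guaranteed by $H_\pm(\beta)=\phi(\beta)/\Lambda>0$ (resp.\ $\phi(\beta)/\lambda$) so $\omega_\pm'(z^\pm)\ne 0$, and that $\omega_\pm''$ remains continuous across $z^\pm$ since both branches of $h_\pm$ vanish at $\beta$.
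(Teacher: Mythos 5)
Your strategy is sound and, at bottom, it is the same phase--plane/first--integral argument as the paper's Appendix A, just packaged differently. The paper works with the two semilinear autonomous equations $-u''=\mu^{-1}g_\infty(u)$, $\mu\in\{\lambda,\Lambda\}$, invokes the classical Berestycki--Lions facts about them, and glues a piece of the periodic $\lambda$-orbit (your cap $u>\beta$, the paper's $s_2=$ your $\alpha$) to the homoclinic $\Lambda$-orbit at the level $s_\infty=\beta$, matching values and derivatives through the conserved energies $E[u,\mu]$; uniqueness is then run through the same energy identities plus uniqueness for the initial value problem. You instead observe that the full equation is equivalent to the single autonomous, locally Lipschitz ODE $u''=h_\pm(u)$ and write down its explicit first integral $H_\pm$, which determines the peak value $\alpha$ from $H_\pm(\alpha)=0$ (your matching condition $\phi(\alpha)=(1-\lambda/\Lambda)\phi(\beta)$ is exactly the paper's equation defining $s_2$), and you get uniqueness directly from Picard--Lindel\"of plus the energy pinning. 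This buys a more self-contained proof (no citation of the classical scalar-field results, no gluing or $C^2$-matching discussion), while the paper's version makes the structure ``periodic orbit capped by homoclinic tails'' more visible; the monotonicity, the unique inflection points $z^\pm$, and the one-sided-decay uniqueness statement come out correctly in your setup, and your treatment of the Lipschitz-but-not-$C^1$ matching at $u=\beta$ is adequate.

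There is one step that fails as written: the exponential decay. From $\tfrac{\Lambda}{2}(\omega_+')^2=\phi(\omega_+)\le \tfrac{V_\infty}{2}\omega_+^2$ you deduce $|\omega_+'|\le\sqrt{V_\infty/\Lambda}\,\omega_+$, but integrating this differential inequality (with $\omega_+'<0$) gives a \emph{lower} bound $\omega_+(x)\ge \omega_+(z^+)\exp\bigl(-\sqrt{V_\infty/\Lambda}\,(x-z^+)\bigr)$, i.e.\ it says $\omega_+$ decays no faster than that rate, not the upper bound you claim. To get the stated estimate you need the reverse inequality for $\phi$: by (f2), $F(t)=o(t^2)$ as $t\to0^+$, and $\phi>0$ on $(0,\beta]$, so there is $c>0$ with $\phi(t)\ge c\,t^2$ on $(0,\beta]$; then on $[z^+,\infty)$ one has $\omega_+'=-\sqrt{2\phi(\omega_+)/\Lambda}\le-\sqrt{2c/\Lambda}\,\omega_+$, and integration yields $\omega_+(x)\le c_1\exp(-c_2|x|)$ (any $c_2<\sqrt{V_\infty/\Lambda}$ is attainable, which is all the proposition asks; the $\cM^-$ case is identical with $\lambda$ in place of $\Lambda$). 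With this direction corrected the proof goes through.
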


We shall prove Proposition \ref{defi:2.1} in Appendix \ref{section:A}.

From now on, we may assume $V(0) < V_\infty$ in (V3) and 
$V(x)$ is not a constant function without loss of generality. 
Under this additional assumption, we fix an $\eta_1>0$ so that
\begin{equation}\label{eq:2.3}
\eta_1<c_2, \quad \Lambda\eta_1^2\left(1+\frac{\eta_0}{2}\right)^2<\frac{V_0}{2},
\end{equation}
where $V_0:=\inf_{\R}V>0$, 
and $\eta_0>0$ and $c_2 >0$ appear in (f2) and Proposition \ref{defi:2.1}. 
We set 
$$
X_{\eta_1}:= \left\{v\in C(\R)\,\Big|\,  \|v\|_{\eta_1}=\sup_{x\in\R}e^{\eta_1|x|}|v(x)|<\infty 
\right\}. 
$$
It is easy to check that $(X_{\eta_1}, \|\cdot \|_{\eta_1})$ is a Banach space.
\begin{lema}\label{defi:2.2}
For every $v\in X_{\eta_1}$, the equations
$$
-\cM^\pm (u'') + V(x) u = f(v(x))\quad {\rm in}\  \R,\quad u\in X_{\eta_1},
$$
have unique solutions. 
\end{lema}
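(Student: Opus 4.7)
The plan is to solve the truncated Dirichlet problems on $[-R,R]$, obtain uniform exponential bounds via a barrier, and pass to the limit; uniqueness will follow from the strong maximum principle applied to the linearization of $-\cM^\pm(\cdot'')+V$.

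First I would record properties of the source $g(x):=f(v(x))$. Combining (f1), (f2) and (f4) one sees $f\ge 0$ on $\R$, so $g\ge 0$, and $g$ is continuous; moreover, since $|v(x)|\le\|v\|_{\eta_1}e^{-\eta_1|x|}\to 0$, condition (f2) yields
\[
|g(x)|\le C(v)\,e^{-\eta_1(1+\eta_0)|x|}\quad\text{for $|x|$ large.}
\]
For each $R>0$, the Dirichlet problem $-\cM^\pm(u'')+V(x)u=g(x)$ on $(-R,R)$ with $u(\pm R)=0$ admits a unique classical solution $u_R$: in one dimension it reduces to $u''=\Phi(V(x)u-g(x))$ with $\Phi:=(\cM^\pm)^{-1}$ Lipschitz, and existence and uniqueness follow by shooting together with the comparison principle for the uniformly elliptic, proper operator $-\cM^\pm(\cdot'')+V$.

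To control $u_R$ uniformly in $R$, I introduce the barrier
\[
\varphi(x):=K\exp\!\bigl(-\eta_1(1+\eta_0/2)|x|\bigr).
\]
For $x\ne 0$, $\varphi''=\eta_1^2(1+\eta_0/2)^2\varphi>0$, so by \eqref{eq:2.3}
\[
-\cM^\pm(\varphi'')+V\varphi\ge\bigl(V_0-\Lambda\eta_1^2(1+\eta_0/2)^2\bigr)\varphi\ge\tfrac{V_0}{2}\varphi.
\]
At $x=0$, $\varphi$ has a downward corner, so no $C^2$ function touches it from below and the viscosity supersolution condition is vacuous there. Choosing $K$ large enough that $(V_0/2)\varphi\ge g$ on $\R$ (possible since $g$ is bounded and decays faster than $\varphi$) gives $-\cM^\pm(\varphi'')+V\varphi\ge g$; since $g\ge 0$ and $0$ is a subsolution, the comparison principle yields $0\le u_R\le\varphi$ on $[-R,R]$.

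The uniform bound $\|u_R\|_\infty\le K$ together with the explicit formula $u_R''=\Phi(Vu_R-g)$ gives a uniform $C^{1,1}$-bound on compact sets, so Arzel\`a--Ascoli produces $u_{R_k}\to u$ in $C^1_{\mathrm{loc}}(\R)$ along a subsequence, with $u$ a solution on $\R$ satisfying $0\le u\le\varphi$; in particular $u\in X_{\eta_1}$. For uniqueness, if $u_1,u_2\in X_{\eta_1}$ are two solutions, then $w:=u_1-u_2\to 0$ at infinity and, using the piecewise linearity of $\cM^\pm$, satisfies $a(x)w''=V(x)w$ for some measurable $a(x)\in[\lambda,\Lambda]$. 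The strong maximum principle for this linear elliptic equation with $V/a\ge V_0/\Lambda>0$ then forces $w\equiv 0$. The main obstacle is setting up the global exponential barrier: it requires the slower decay rate $1+\eta_0/2$ so that the amplification by $V$ dominates the curvature term exactly as permitted by \eqref{eq:2.3}, and the corner of $\varphi$ at $x=0$ must be handled in the viscosity sense.
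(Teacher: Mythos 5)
Your proposal is correct and follows essentially the same route as the paper: truncated Dirichlet problems, a positive supersolution barrier with decay rate $(1+\eta_0/2)\eta_1$ combined with the comparison principle to obtain uniform exponential bounds, local compactness to pass to the limit, and uniqueness via a maximum-principle argument on the difference of two solutions decaying at infinity. The only cosmetic differences are that the paper uses the barrier only on $|x|\ge R_0$ (matching at $\pm R_0$ via the uniform $L^\infty$ bound, thus avoiding the corner at the origin that you instead handle correctly in the viscosity sense) and phrases uniqueness through the inequality $\cM^\pm(m_1)-\cM^\pm(m_2)\ge \cM^-(m_1-m_2)$ rather than your equivalent factorization $a(x)w''=V(x)w$ with $a(x)\in[\lambda,\Lambda]$.
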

\begin{proof} 
We prove the claim at the same time for $\cM^+$ and $\cM^-$. 
Let $v\in X_{\eta_1}$. For each $n\in\N$, consider
	\[
		\left\{\begin{aligned}
			-\cM^\pm (u'') + V(x) u &= f(v(x))\quad {\rm in} \  (-n,n),\\
			u(-n)=0&=u(n).
		\end{aligned}\right.
	\]
Then the above equations have a unique solution $u_n\in C^2([-n,n])$ 
due to (V1). In fact, since $f(s) \geq 0$ by (f1), (f2) and (f4), 
$u\equiv 0$ is a subsolution of the above equation. 
In addition, one can check that the principal eigenvalues 
of $-\cM^\pm + V(x)$ on $[-n,n]$ with the Dirichlet zero boundary condition 
are positive due to (V1). Thus, by (f2), the positive eigenfunctions 
multiplied by small positive constants become supersolutions and 
so, the  solution $u_n$ is unique.

	Now, the maximum principle yields $u_n \geq 0$ in $[-n,n]$. 
Moreover, we have
$$
\|u_n\|_{L^\infty(-n,n)}\le V_0^{-1}\|f(v)\|_{L^\infty(\R)}.
$$
Indeed, let $x_n\in (-n,n)$ be a maximum point of $u_n$. 
It follows from the equation and $u_n''(x_n) \leq 0$ that
\begin{equation}\label{eq:2.4}
	\| u_n \|_{L^\infty(-n,n)} = u_n(x_n) 
	\le \frac{\|f(v)\|_{L^\infty(\R)}}{V(x_n)}\le \frac{\|f(v)\|_{L^\infty(\R)}}{V_0}.
\end{equation}

	Next we shall show that there exist $C_3>0$ and $\delta_0>\eta_1$ such that
	\begin{equation}\label{eq:2.5}
		u_n(x)\le C_3e^{-\delta_0|x|} \quad 
		\text{for all $x\in \R$ and $n\ge 1$}.
	\end{equation}
To this end, we first notice that (f2) yields
$$
f(s)\le C_4|s|^{1+\eta_0}\quad \text{for all } |s|\le \|v\|_{L^\infty}.
$$
Hence, by the definition of $\|\cdot \|_{\eta_1}$, we obtain
\begin{equation}\label{eq:2.6}
f(v(x))\le C_4 |v(x)|^{1+\eta_0}\le C_4\|v\|_{\eta_1}^{1+\eta_0}e^{-(1+\eta_0)\eta_1|x|} 
\leq C_5 e^{-(1+\eta_0) \eta_1 |x| }
\end{equation}
for all $x \in \R$. 
Recalling \eqref{eq:2.3}, fix an $R_0>0$ so that
\begin{equation}\label{eq:2.7}
-\Lambda\eta_1^2\left(1+\frac{\eta_0}{2}\right)^2 
+ V_0-C_5e^{-\eta_0\eta_1 R_0/2}\ge \frac{V_0}{4}>0.
\end{equation}

	We only treat $n$ with $n>R_0$ and set 
$$
M := 1 + \frac{\|f(v)\|_{L^\infty}}{V_0}, \quad 
\omega_0(x):=Me^{-\left(1+\frac{\eta_0}{2}\right)\eta_1(|x|-R_0)}.
$$
Noting \eqref{eq:2.3}, \eqref{eq:2.6}, \eqref{eq:2.7} and 
\[
	\omega_0''=\left(1+\frac{\eta_0}{2}\right)^2\eta_1^2\omega_0\ge 0,\quad  M\ge 1,
\]
we get the following: for all $R_0\le |x|\le n$,
\begin{eqnarray*}
& &-\cM^\pm(\omega_0'')+V\omega_0-f(v)\\
&\ge&-\cM^\pm (\omega_0'')+V\omega_0-Mf(v)\\
&\ge&\left\{ -\Lambda \left(1+\frac{\eta_0}{2}\right)^2\eta_1^2+V\right\}\omega_0
-MC_5e^{-(1+\eta_0)\eta_1|x|}\\
&\ge&\left[  \left\{ -\Lambda \left(1+\frac{\eta_0}{2}\right)^2\eta_1^2+V\right\}
e^{\left(1+\frac{\eta_0}{2}\right)\eta_1R_0} 
- C_5e^{-\eta_0\eta_1|x|/2} \right]Me^{-(1+\frac{\eta_0}{2})\eta_1|x|}\\
&\ge&\left[   -\Lambda \left(1+\frac{\eta_0}{2}\right)^2\eta_1^2 
+ V_0 - C_5e^{-\eta_0\eta_1R_0/2} \right] 
Me^{-\left(1+\frac{\eta_0}{2}\right)\eta_1|x|}\\
&\ge& 0.
\end{eqnarray*}
Since 
	\begin{equation}\label{eq:2.8}
		\begin{aligned}
			&\cM^+(m_1) - \cM^+(m_2) \geq \cM^-(m_1-m_2), \\
			&\cM^-(m_1) - \cM^-(m_2) \geq \cM^-(m_1-m_2)
		\end{aligned}
	\end{equation}
for all $m_1,m_2 \in \R$, we have 
$$
-\cM^- ( \omega_0'' - u_n'' ) + V(x) (\omega_0 - u_n) \geq 0
$$
for each $R_0\le |x|\le n$. 
From \eqref{eq:2.4} and the definitions of $M$ and $\omega_0$, we have 
$$
u_n(\pm R_0)\le M = \omega_0(\pm R_0), \quad 0=u_n(\pm n)<\omega_0(\pm n).
$$
By the comparison principle, we get
$$
u_n(x)\le \omega_0(x)\quad \mbox{for all} \  R_0\le |x|\le n.
$$
Thus, \eqref{eq:2.5} holds with $\delta_0 := (1+ \eta_0/2) \eta_1$.

	By the elliptic regularity, one sees that $(u_n)$ is bounded in $C^2_{\rm loc}(\R)$, 
hence there exists $(u_{n_k})$ such that $u_{n_k}\to u_0$ in $C^2_{\rm loc}(\R)$, 
where $u_0$ satisfies
$$
-\cM^\pm (u_0'') + V(x) u_0 = f(v(x))\quad\mbox{in }\,\R.
$$
Moreover, from \eqref{eq:2.5}, we obtain
$$
u_0(x)\le C_6e^{-\delta_0|x|}\quad\mbox{in }\,\R.
$$
Since $\delta_0>\eta_1$, $u_0\in X_{\eta_1}$ 
and the existence of solutions is proved.

	For the uniqueness, let $u_1,u_2\in X_{\eta_1}$ be solutions of
$$
-\cM^\pm (u'') + V(x) u = f(v(x))\quad\mbox{in }\,\R
$$
and set $w(x) := u_1(x) - u_2(x)$. 
Then it follows from \eqref{eq:2.8} that $\pm w(x)$ satisfy 
	\[
		- \cM^- (u'') + V(x) u \geq 0 \quad {\rm in} \ \R. 
	\]
Noting that $w(x) \to 0$ as $|x| \to \infty$, 
combining with the above inequality, $\pm w(x)$ do not have 
any negative minimum on $\R$. Hence, $w \equiv 0$ and 
$u_1 \equiv u_2$. 
Thus we complete the proof. 
\end{proof}

\begin{defi}
For $v \in X_{\eta_1}$, we denote by ${\mathcal L}^\pm(v)$ the unique solutions of 
$$
-\cM^\pm (u'') + V(x) u = f(v)  \quad\mbox{in }\,\R, \quad u \in X_{\eta_1}.
$$
\end{defi}

Thanks to Lemma \ref{defi:2.2}, 
${\mathcal L}^\pm: X_{\eta_1}\to X_{\eta_1}$. 
Furthermore, 

\begin{lema}\label{defi:2.4}
The maps ${\mathcal L}^\pm: X_{\eta_1}\to X_{\eta_1}$ are compact.
\end{lema}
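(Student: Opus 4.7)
To prove the compactness of $\mathcal{L}^\pm$, I would fix an arbitrary bounded sequence $(v_n) \subset X_{\eta_1}$ with $\|v_n\|_{\eta_1} \leq M$ and aim to show that $u_n := \mathcal{L}^\pm(v_n)$ admits a subsequence converging in the weighted norm. The strategy has three parts: obtain uniform estimates on $(u_n)$, extract a locally convergent subsequence by Arzel\`a--Ascoli, and then upgrade local convergence to convergence in $X_{\eta_1}$ via tail control.

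First, I would combine the bound $\|v_n\|_{L^\infty} \leq \|v_n\|_{\eta_1} \leq M$ with the continuity of $f$ and (f1) to obtain a uniform bound on $\|f(v_n)\|_{L^\infty}$, which via \eqref{eq:2.4} gives a uniform $L^\infty$ bound on $u_n$. The crucial observation is that the barrier argument proving \eqref{eq:2.5} in Lemma \ref{defi:2.2} carries over essentially verbatim, since the constant $C_5$ in \eqref{eq:2.6}, the threshold $R_0$ chosen via \eqref{eq:2.7}, and the multiplier in the barrier $\omega_0$ depend only on $\|v_n\|_{\eta_1}$ and $\|f(v_n)\|_{L^\infty}$. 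This yields a single pair $(C,\delta_0)$ with $\delta_0 = (1+\eta_0/2)\eta_1 > \eta_1$ so that
\[
0 \leq u_n(x) \leq C e^{-\delta_0|x|} \quad \text{for all } x \in \R, \ n \in \N.
\]

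Second, reading off $u_n''$ from the equation $\mathcal{M}^\pm(u_n'') = V u_n - f(v_n)$ and using that $\mathcal{M}^\pm$ is a pointwise bijection of $\R$ with $|\mathcal{M}^\pm(s)| \geq \lambda|s|$, I would conclude that $\|u_n''\|_{L^\infty} \leq \lambda^{-1}(\|V\|_{L^\infty}\|u_n\|_{L^\infty} + \|f(v_n)\|_{L^\infty})$ is uniformly bounded. Together with the $L^\infty$ bound, this makes $(u_n)$ precompact in $C^1_{\mathrm{loc}}(\R)$, and a diagonal extraction produces a subsequence converging locally uniformly to some $u_0 \in C(\R)$, which inherits the same exponential decay by pointwise passage to the limit.

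Third, to promote the local convergence to convergence in $X_{\eta_1}$, given $\varepsilon > 0$ I would choose $R$ so large that $2Ce^{-(\delta_0-\eta_1)R} < \varepsilon$; by the uniform decay this controls the contribution of $|x| \geq R$ to $\|u_n - u_0\|_{\eta_1}$, while on $[-R,R]$ the weight $e^{\eta_1|x|}$ is bounded by $e^{\eta_1 R}$ and uniform local convergence handles the remaining piece. The main obstacle—and what makes the choice of $\eta_1$ in \eqref{eq:2.3} essential—is obtaining decay strictly faster than $\eta_1$ \emph{uniformly in $n$}; without this strict inequality the final tail estimate collapses, so the bulk of the work is the careful tracking of constants in the barrier argument of Lemma \ref{defi:2.2}.
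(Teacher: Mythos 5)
Your treatment of the precompactness half is essentially the paper's argument: uniform $L^\infty$ bound from \eqref{eq:2.4}, a barrier giving a decay rate $(1+\eta_0/2)\eta_1>\eta_1$ uniformly in $n$, uniform bounds on $u_n''$ from the equation via $|\cM^\pm(s)|\ge\lambda|s|$, local compactness, and the split $|x|\le R$ / $|x|\ge R$ to upgrade to $X_{\eta_1}$-convergence. One small point you gloss over: in Lemma \ref{defi:2.2} the comparison is made on $[R_0,n]$ against the boundary values $u_n(\pm n)=0$, whereas here the functions $u_n=\mathcal{L}^\pm(v_n)$ live on all of $\R$, so the barrier argument is not quite ``verbatim'' --- you must either justify the comparison on the half-line using that $u_n\in X_{\eta_1}$ forces $u_n(x)\to 0$ at infinity (so $\omega_0-u_n$ cannot attain a negative minimum), or do as the paper does and add the extra term $e^{(1+\frac{\eta_0}{2})\eta_1(|x|-R)}$ to the barrier $w_R$, compare on the compact interval $[R_2,R]$, and then let $R\to\infty$. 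Either route works; yours just needs that sentence.

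The genuine gap is that you never prove continuity of $\mathcal{L}^\pm$. The lemma is used to run Leray--Schauder degree theory, so ``compact'' here means completely continuous: continuous and mapping bounded sets to relatively compact sets. Your proposal only establishes the second property. The paper closes this by taking $v_n\to v_0$ in $X_{\eta_1}$, repeating the above estimates to extract a subsequence with $u_{n_k}\to u_0$ in $X_{\eta_1}\cap C^2_{\rm loc}(\R)$, passing to the limit in the equation to see that $u_0$ solves $-\cM^\pm(u_0'')+Vu_0=f(v_0)$ in $X_{\eta_1}$, and then invoking the uniqueness statement of Lemma \ref{defi:2.2} to conclude that the limit is independent of the subsequence, hence the whole sequence $\mathcal{L}^\pm(v_n)$ converges to $\mathcal{L}^\pm(v_0)$. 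Note that for this step you do need convergence in $C^2_{\rm loc}$ (or at least enough to pass to the limit in the equation), not merely the $C^1_{\rm loc}$ convergence you extract; with the uniform bound on $u_n''$ and the equation this is available, but it has to be said, and without the continuity argument the lemma as used in the degree-theoretic scheme is not established.
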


\begin{proof}
Let $(v_n)\subset X_{\eta_1}$ be a bounded sequence and 
put $u_n={\mathcal L}^\pm (v_n)$. 
We first show that $(u_n)$ has a convergent subsequence in $X_{\eta_1}.$ Set
$$
M_1=\sup_{n\ge 1}\|v_n\|_{\eta_1}.
$$
Then we have
$$
v_n(x)\le M_1e^{-\eta_1|x|}\quad \mbox{for all}\  x\in\R,
$$
and there exists an $M_2>0$ such that
$$
\|u_n\|_{L^\infty}\le\frac{\|f(v_n)\|_{L^\infty}}{V_0}\le M_2\quad\mbox{for all } n\ge 1
$$
(see the beginning of proof of Lemma \ref{defi:2.2}). 
Now as in \eqref{eq:2.6} and \eqref{eq:2.7}, 
choose an $R_2>0$ so large that, for $|x|\ge R_2$ we have
$$
|f(v_n(x))|\le C_7 e^{-(1+{\eta_0})\eta_1|x|}$$
and
$$
-\Lambda\eta_1^2\left(1+\frac{\eta_0}{2}\right)^2+V_0
-C_7 e^{-\eta_0\eta_1 |x|/2}\ge \frac{V_0}{4}>0.
$$
For $R>R_2$,  set
$$
w_R(x):=M_3\left[e^{-\left(1+\frac{\eta_0}{2}\right)\eta_1(|x|-R_2)} 
+  e^{\left(1+\frac{\eta_0}{2}\right)\eta_1(|x|-R)} \right]
$$
where $M_3:=1+M_2.$ 
Since $w_R''=\left(1+\frac{\eta_0}{2}\right)^2\eta_1^2 w_R \geq 0$,  
as in the proof of Lemma \ref{defi:2.2}, for all $R_2\le |x|\le R$, we get
\begin{eqnarray*}
& &-\cM^\pm (w_R'') + V(x) w_R - f(v_n)\\
&\ge& -\cM^\pm (w_R'') + V(x) w_R - M_3f(v_n)\\
&\ge & \left\{ -\Lambda \left(1+\frac{\eta_0}{2}\right)^2\eta_1^2+V \right\} M_3 
\left[ e^{-\left(1+\frac{\eta_0}{2}\right)\eta_1(|x|-R_2)} 
+ e^{\left(1+\frac{\eta_0}{2}\right)\eta_1(|x|-R)} \right]\\
& & - M_3C_7e^{-\left(1+\eta_0 \right)\eta_1|x|}\\
&\ge &\left[  -\Lambda \left(1+\frac{\eta_0}{2}\right)^2\eta_1^2 
+ V_0-C_7e^{-\eta_0\eta_1R_2/2}  \right]M_3
e^{-\left(1+\frac{\eta_0}{2}\right)\eta_1|x|}\\
& & +\left[  -\Lambda \left(1+\frac{\eta_0}{2}\right)^2\eta_1^2 + V_0\right] 
e^{\left(1+\frac{\eta_0}{2}\right)\eta_1(|x|-R)}\\
&\ge & 0=-\cM^\pm (u_n'') + V(x) u_n - f(v_n).
\end{eqnarray*}
Noting
$$
0\le u_n(\pm R_2)\le M_2\le w_R(\pm R_2) \quad\mbox{and}\quad 
0\le u_n(\pm R)\le M_2\le w_R(\pm R),
$$
the comparison principle gives
$$
u_n(x)\le w_R(x)=M_3\left[ e^{-\left(1+\frac{\eta_0}{2}\right)\eta_1(|x|-R_2)} 
+ e^{\left(1+\frac{\eta_0}{2}\right)\eta_1(|x|-R)}  \right],
$$
for all $R_2\le |x|\le R$ and $n\ge 1.$
Letting $R\to\infty,$ we obtain
$$
u_n(x)\le C_8 e^{-(1+\frac{\eta_0}{2})\eta_1|x|}  
\quad \text{for all $x \in \R$ and $n \geq 1$.}
$$
Using this exponential decay and the equation, we observe that there exists $C_9>0$ such that
$$
\|u_n\|_{L^\infty}+\|u_n'\|_{L^\infty}+\|u_n''\|_{L^\infty} \leq C_9 
\quad \text{for all $n \geq 1$}.
$$
Thus, there exists $(u_{n_k})$ such that $u_{n_k}\to u_0$ in $C^2_{\rm loc}(\R)$, 
where $u_0$ satisfies
$$
u_0(x)\le C_8 e^{-\left(1+\frac{\eta_0}{2}\right)\eta_1|x|} \quad 
\text{for all } x \in \R.
$$
This implies that $u_0 \in X_{\eta_1}$ and 
$u_{n_k}\to u_0$ in $X_{\eta_1}$. Hence, $(u_{n})$ is relatively compact in $X_{\eta_1}$.

	Finally, we prove the continuity of $\mathcal{L}^\pm$. 
If $v_n\to v_0$ in $X_{\eta_1}$, then arguing as in the above, 
there exists a subsequence $(u_{n_k})$ such that 
$u_{n_k} \to u_0$ in $X_{\eta_1} \cap C^2_{\rm loc}(\R)$ where $u_0$ satisfies 
$$
-\cM^\pm (u_0'') + V(x) u_0 = f(v_0)  \quad\mbox{in }\,\R.
$$
By Lemma \ref{defi:2.2}, $u_0$  is uniquely determined and 
does not depend on choices of subsequences. 
Therefore, it is easily seen that the whole sequence $(u_n)$ converges to $u_0$ 
in $X_{\eta_1}$ and the maps ${\mathcal L}^\pm$ are continuous. 
\end{proof}

Using $\mathcal{L}^\pm$, the fact $f(s) \geq 0$ for every $s \in \R$ and 
the strong maximum principle, 
we notice that $u \in X_{\eta_1}$ is a solution of (\ref{eq:1.1}) if and only if 
$u={\mathcal L}^\pm(u)$ with $u \neq 0$.

		Next, in order to find a nontrivial fixed point of $\mathcal{L}^\pm$ in $X_{\eta_1}$, 
following the idea in \cite{dFLN-82} (cf. \cite{FQ-04}), 
we shall show that
\begin{enumerate}
	\item[i)] 
		There exists an $r_0>0$ such that 
		${\rm deg}_{X_{\eta_1}}({\rm id}-{\mathcal L}^\pm, B_{r_0}(0), 0)=1$.
	\item[ii)] 
		There exists an $r_1>r_0$ such that 
		${\rm deg}_{X_{\eta_1}}({\rm id} - {\mathcal L}^\pm, B_{r_1}(0), 0)=0$.
\end{enumerate}
Here ${\rm deg}_{X_{\eta_1}}({\rm id}-{\mathcal L}^\pm, \Omega, 0)$ 
stands for the degree of the map ${\rm id}-{\mathcal L}^\pm$ in $X_{\eta_1}$. 
From i) and ii), it follows that 
$$
{\rm id} - \mathcal{L}^\pm \neq 0 \quad 
{\rm on} \ \partial A_{r_0,r_1} \quad {\rm and} \quad 
{\rm deg}_{X_{\eta_1}}({\rm id}-{\mathcal L}^\pm, A_{r_0,r_1}, 0)=-1
$$
where $A_{r_0,r_1}:=\{u\in X_{\eta_1} \,|\, r_0 < \|u\|_{\eta_1}< r_1 \}.$
Thus, if we can prove i) and ii) we can find a solution of \eqref{eq:1.1} 
in $A_{r_0,r_1}$.

		First we show i), namely, 
\begin{lema}\label{defi:2.5}\hspace{-0,3cm} There exists an $r_0>0$ such that 
${\rm deg}_{X_{\eta_1}}( {\rm id} -{\mathcal L}^\pm, B_{r_0}(0), 0)=1.$
\end{lema}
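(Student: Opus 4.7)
The plan is to compute the degree by a straight-line homotopy to the identity, exploiting the subcritical behaviour of $f$ at $0$ given by (f2).

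Define $H:[0,1]\times X_{\eta_1}\to X_{\eta_1}$ by $H(t,u):=u-t\mathcal{L}^\pm(u)$. Since $\mathcal{L}^\pm$ is compact (Lemma \ref{defi:2.4}), the family $\{t\mathcal{L}^\pm\}_{t\in[0,1]}$ is an admissible compact homotopy of the identity, and joint continuity in $(t,u)$ is automatic. Thus it suffices to produce $r_0>0$ such that $H(t,u)\neq 0$ for all $t\in[0,1]$ and all $u\in\partial B_{r_0}(0)$; then homotopy invariance yields
\[
\deg_{X_{\eta_1}}(\mathrm{id}-\mathcal{L}^\pm,B_{r_0}(0),0)=\deg_{X_{\eta_1}}(\mathrm{id},B_{r_0}(0),0)=1.
\]

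To rule out zeros on the sphere for small $r_0$, I would argue as follows. Any $u$ with $u=t\mathcal{L}^\pm(u)$ satisfies, using the positive homogeneity $\mathcal{M}^\pm(\alpha s)=\alpha\mathcal{M}^\pm(s)$ for $\alpha\geq 0$,
\[
-\mathcal{M}^\pm(u'')+V(x)u=tf(u)\quad\text{in }\R.
\]
Since $f\geq 0$ by (f1)–(f2)–(f4), the same comparison argument used for $\mathcal{L}^\pm$ gives $u\geq 0$: if $u$ had a negative infimum at some $x_1\in\R$ (attained since $u\in X_{\eta_1}$ decays at infinity), then $u''(x_1)\geq 0$, hence $\mathcal{M}^\pm(u''(x_1))\geq 0$, forcing $V(x_1)u(x_1)\geq tf(u(x_1))\geq 0$, contradicting $V_0>0$. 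Now if $u\not\equiv 0$, $u$ attains its positive maximum at some $x_0\in\R$, where $u''(x_0)\leq 0$ and thus $-\mathcal{M}^\pm(u''(x_0))\geq 0$. The equation at $x_0$ yields
\[
V_0\,u(x_0)\le V(x_0)u(x_0)\le t f(u(x_0))\le f(u(x_0)).
\]

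By (f2), $f(s)/s\to 0$ as $s\to 0^+$, so there exists $\delta>0$ with $f(s)<(V_0/2)s$ for $0<s\le\delta$. If we choose any $r_0\in(0,\delta)$, then for $u\in\partial B_{r_0}(0)$ we have $\|u\|_{L^\infty(\R)}\le\|u\|_{\eta_1}=r_0<\delta$, so the inequality above would give $V_0\,u(x_0)<(V_0/2)u(x_0)$, which is impossible as $u(x_0)>0$. Therefore $u\equiv 0$, contradicting $\|u\|_{\eta_1}=r_0>0$. Hence $H(t,\cdot)$ is zero-free on $\partial B_{r_0}(0)$ for every $t\in[0,1]$, and the degree equals $1$ as claimed.

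There is no serious obstacle here: the only subtlety is verifying that the $L^\infty$-norm controlled by the $\|\cdot\|_{\eta_1}$-norm (obvious since $e^{\eta_1|x|}\ge 1$) is what lets the smallness hypothesis on $\|u\|_{\eta_1}$ feed into the pointwise comparison $f(s)\le (V_0/2)s$ provided by (f2), and that the positive homogeneity of $\mathcal{M}^\pm$ lets us rewrite $u=t\mathcal{L}^\pm(u)$ as a PDE for $u$ itself rather than for $u/t$.
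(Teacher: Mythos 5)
Your proof is correct and follows essentially the same route as the paper: the homotopy $u\mapsto u-\beta\mathcal{L}^\pm(u)$, the rewriting of $u=\beta\mathcal{L}^\pm(u)$ as $-\cM^\pm(u'')+V(x)u=\beta f(u)$ via positive homogeneity, and the evaluation at a maximum point combined with (f2) to exclude small nontrivial solutions. The paper phrases the last step as a uniform lower bound $\delta_1\le\|u\|_{L^\infty}$ for all nontrivial solutions and then takes $r_0<\delta_1$, which is the same estimate you obtain by contradiction.
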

\begin{proof}
It suffices to prove that there exists an $r_0>0$ such that 
$({\rm id}-\beta{\mathcal L}^\pm)(u)\not =0$ 
for all $u\in \partial B_{r_0}(0)$ and all $\beta\in [0,1]$ since 
the homotopy invariance gives 
$$
{\rm deg}_{X_{\eta_1}}({\rm id}-{\mathcal L}^\pm, B_{r_0}(0), 0) 
= {\rm deg}_{X_{\eta_1}}({\rm id}, B_{r_0}(0), 0)=1. 
$$
We first notice that for $\beta>0$, the equations 
$u=\beta{\mathcal L}^\pm(u)$ are equivalent to 
$$
-\cM^\pm (u'') + V(x) u = \beta f(u)  \quad\mbox{in }\,\R
$$
for $u\in X_{\eta_1}$. If $u\in X_{\eta_1}\setminus \{0\}$ satisfies 
$u=\beta{\mathcal L}^\pm(u)$  with $\beta>0$, 
then the fact that $f(s)\geq 0$ for all $s \in \R$ yields $u>0$ in $\R$. 
Since $u(x)\to 0$ as $|x|\to\infty$, let $x_0\in\R$ be a maximum point of $u$. 
As in the proof of Lemma \ref{defi:2.2}, 
from $\beta \in [0,1]$ and $f(s)>0$ for $s>0$ due to (f4), we get 
$$
0<V_0\le V(x_0) \leq \frac{\beta f(u(x_0))}{u(x_0)} \leq \frac{f(u(x_0))}{u(x_0)}.
$$
By (f2), we may find a $\delta_1>0$, which is independent of $\beta$ and $u$, 
so that
$$
\delta_1\le u(x_0)=\|u\|_{L^\infty}\le \|u\|_{\eta_1}
$$
for all $u\in X_{\eta_1}\setminus \{0\}$ and $\beta\in (0,1]$ with 
$u = \beta  \mathcal{L}^\pm (u)$. 
Therefore, selecting an $r_0 \in (0,\delta_1)$, we see that
$$
({\rm id}-\beta{\mathcal L}^\pm )(u)\not= 0
$$
for all $u\in\partial B_{r_0}(0)$ and for all $\beta\in (0,1]$. Thus the lemma holds.
\end{proof}

		To show ii), we need some preparations. 
From (V3), we may select a $\kappa_0 > 0$ so that 
	\begin{equation}\label{eq:2.9}
		[-3 \kappa_0, 3 \kappa_0] \subset 
		[ V_\infty - V > 0] := \{ x \in \R  \ |\ V_\infty - V(x) >0 \}. 
	\end{equation}
Next choose a $\varphi_0 \in C^\infty_0(\R)$ satisfying  
	\begin{equation}\label{eq:2.10}
		\begin{aligned}
			& \varphi_0(-x) = \varphi_0(x), \quad 
			0 \leq \varphi_0 \leq 1 \quad {\rm in}\ \R, 
			\quad \varphi'_0(x) \leq 0 \quad 
			{\rm in} \ [0,\infty), 
			\\
			& \varphi_0(x) = 1 \quad 
			{\rm if}\ 0 \leq x \leq \kappa_0, \quad 
			\varphi_0(x) = 0 \quad {\rm if\ }
			2\kappa_0 \leq x .
		\end{aligned}
	\end{equation}
Then we first prove
	\begin{lema}\label{defi:2.6}
		There exists a $\tilde{t} = \tilde{t}(f,V_\infty) > 0$ such that 
			\begin{equation}\label{eq:2.11}
				\frac{\kappa_0^2}{4 \Lambda} t 
				\leq \| u \|_{L^\infty([-\kappa_0,\kappa_0])} 
				\leq \| u \|_{X_{\eta_1}} 
				\quad 
				{\rm for\ each} \ t \geq \tilde{t} \quad {\rm and} \quad 
				u \in \cS_t^\pm
			\end{equation}
		where 
			\[
				\begin{aligned}
					\cS_t^\pm &:= \{ u \in X_{\eta_1}\ |\ 
					-\cM^\pm (u'') + V(x) u = f(u) + t \varphi_0  \}.
				\end{aligned}
			\]
	\end{lema}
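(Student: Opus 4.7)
The upper bound $\|u\|_{L^\infty([-\kappa_0,\kappa_0])}\le \|u\|_{X_{\eta_1}}$ is immediate from the definition $\|u\|_{\eta_1}=\sup_{x\in\R}e^{\eta_1|x|}|u(x)|$ (since $e^{\eta_1|x|}\ge 1$), so the content of the lemma lies entirely in the lower bound. My plan is to construct an explicit quadratic subsolution on $[-\kappa_0,\kappa_0]$ and compare it against $u$ via the same maximum-principle argument used in the uniqueness part of Lemma~\ref{defi:2.2}.

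Concretely, I would take
\[
\phi(x):=\frac{t}{4\Lambda}\bigl(\kappa_0^2-x^2\bigr)\qquad\text{on }[-\kappa_0,\kappa_0],
\]
so that $\phi(\pm\kappa_0)=0$, $\max\phi=\phi(0)=\kappa_0^2 t/(4\Lambda)$, and $\phi''\equiv -t/(2\Lambda)<0$. Since $\phi''<0$, one has $\cM^\pm(\phi'')\ge\Lambda\phi''=-t/2$ and hence $-\cM^\pm(\phi'')\le t/2$. By (V1)--(V3) the potential satisfies $V(x)\le V_\infty$ on $\R$, so $V\phi\le V_\infty\kappa_0^2 t/(4\Lambda)\le t/2$ on $[-\kappa_0,\kappa_0]$ provided $V_\infty\kappa_0^2\le 2\Lambda$---a harmless strengthening of \eqref{eq:2.9}, since that requirement only asks $\kappa_0$ to be small. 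Thus $\phi$ is a subsolution: $-\cM^\pm(\phi'')+V\phi\le t$ on $(-\kappa_0,\kappa_0)$.

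On the other hand, any $u\in\cS_t^\pm$ is nonnegative on $\R$ by the maximum principle (applied to $f(u)+t\varphi_0\ge 0$), so $f(u)\ge 0$; combined with $\varphi_0\equiv 1$ on $[-\kappa_0,\kappa_0]$, the solution satisfies $-\cM^\pm(u'')+Vu=f(u)+t\ge t$ there. Set $w:=u-\phi$. Subtracting the inequalities and using \eqref{eq:2.8}, exactly as in Lemma~\ref{defi:2.2}, I get $-\cM^-(w'')+Vw\ge 0$ on $(-\kappa_0,\kappa_0)$ with $w(\pm\kappa_0)=u(\pm\kappa_0)\ge 0$. An interior negative minimum $x_0$ of $w$ would give $w''(x_0)\ge 0$, hence $-\cM^-(w''(x_0))\le 0$ and $-\cM^-(w''(x_0))+V(x_0)w(x_0)<0$, a contradiction. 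Therefore $w\ge 0$ on $[-\kappa_0,\kappa_0]$, i.e.\ $u\ge\phi$, which forces $\|u\|_{L^\infty([-\kappa_0,\kappa_0])}\ge\phi(0)=\kappa_0^2 t/(4\Lambda)$.

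The only genuine obstacle is the verification of the subsolution inequality: the factor $4$ (rather than $2$) in the denominator of the bound is precisely what creates room to absorb the $V\phi$ term into the remaining $t/2$, at the cost of the size condition $V_\infty\kappa_0^2\le 2\Lambda$. The threshold $\tilde t>0$ does not appear to be essential to this comparison argument---the estimate above is valid for every $t>0$---so $\tilde t$ can be taken as any convenient positive constant depending on $f$ and $V_\infty$, to be fixed if a later step such as Proposition~\ref{defi:2.9} requires a quantitative buffer.
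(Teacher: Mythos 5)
Your argument is correct, but it proves the lower bound by a genuinely different route than the paper. The paper never uses a barrier: it bounds $f(s)-V_\infty s\ge -c(f,V_\infty)$ uniformly in $s\ge 0$ (this is where (f2) \emph{and} the superlinearity (f3) enter, and it is exactly the source of the threshold $\tilde t$, chosen so that $t-c\ge t/2$), deduces the pointwise concavity estimate $u''\le -t/(2\Lambda)$ on $[-\kappa_0,\kappa_0]$, and then integrates directly, splitting into the two cases $u'\le 0$ somewhere in $[-\kappa_0,0]$ or $u'>0$ there. You instead keep the term $Vu$ on the left, use only $f(u)\ge 0$ and $\varphi_0\equiv 1$, and compare $u$ with the explicit quadratic $\phi$ via \eqref{eq:2.8} and the maximum principle, exactly as in the uniqueness step of Lemma \ref{defi:2.2}. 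What your approach buys: it does not use (f3) at all and yields \eqref{eq:2.11} for every $t>0$, so $\tilde t$ becomes superfluous (harmless, since later applications only need the bound for large $t$). What it costs: the subsolution inequality for $\phi$ requires $V_\infty\kappa_0^2\le 2\Lambda$, which is not part of \eqref{eq:2.9}; as you note, one can impose it by shrinking $\kappa_0$ at the moment \eqref{eq:2.9} is fixed (the inclusion there is stable under shrinking, and the later uses of $\kappa_0$ in Lemma \ref{defi:2.8}, Proposition \ref{defi:2.9} and the proof of Theorem \ref{defi:1.2} only need \eqref{eq:2.9} and a bound of the form $\|u\|_{L^\infty}\ge c\,t$), or alternatively keep $\kappa_0$ as given and accept the slightly weaker constant $\kappa_0^2 t/(2\Lambda+V_\infty\kappa_0^2)$ from the barrier $a(\kappa_0^2-x^2)$ with $a=t/(2\Lambda+V_\infty\kappa_0^2)$, which is equally sufficient downstream. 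The paper's proof, by contrast, works for any $\kappa_0$ satisfying \eqref{eq:2.9} and delivers the stated constant $\kappa_0^2/(4\Lambda)$, at the price of the restriction $t\ge\tilde t$.
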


	\begin{proof}
		By (f2) and (f3), there exists a $c(f,V_\infty) > 0$ such that 
	\[
		\inf_{0 \leq s} \left( \frac{f(s)}{s} - V_\infty \right) s \geq 
		- c(f,V_\infty).
	\]
Choose a $\tilde{t} = \tilde{t}(f,V_\infty)>0$ so that 
if $t \geq \tilde{t}$, then 
$-c(f,V_\infty) + t \geq t /2$. For this $\tilde{t}$, 
we shall prove that \eqref{eq:2.11} holds.

		Let $t \geq \tilde{t}$ and $u \in \cS_t^\pm$. 
Since $t>0$, we have $u \not\equiv 0$. 
Thus $u > 0$ in $\R$ due to $f(s) \geq 0$ in $\R$ and 
the strong maximum principle. Hence, (V3) yields 
	\[
		\begin{aligned}
		- \cM^\pm (u'') &= f(u) + t \varphi_0 - V(x) u 
		= \left( \frac{f(u)}{u} - V(x) \right) u + t \varphi_0 
		\\
		& \geq \left( \frac{f(u)}{u} - V_\infty \right) u + t \varphi_0 
		\geq - c(f,V_\infty) + t \varphi_0 \quad {\rm in}\ \R.
		\end{aligned}
	\]
By the definition of $\varphi_0$, we see
	\[
		- \cM^\pm (u'') \geq \frac{t}{2} \quad {\rm in} \ 
		[-\kappa_0,\kappa_0],
	\]
which implies 
	\[
		u'' \leq - \frac{t}{2 \Lambda} \quad {\rm in}\ [-\kappa_0,\kappa_0]. 
	\]
Integrating the inequality over $[x,y] \subset [-\kappa_0, \kappa_0]$, one has 
	\begin{equation}\label{eq:2.12}
		u'(y) \leq u'(x) - \frac{t}{2 \Lambda} (y -x) 
		\quad {\rm for}\ -\kappa_0 \leq x \leq y \leq \kappa_0.
	\end{equation}

		Now we divide our arguments into two cases: 

\smallskip

{\bf Case 1} There exists an $x_0 \in [-\kappa_0,0]$ such that 
$u'(x_0) \leq 0$. 

{\bf Case 2} $u'>0$ in $[-\kappa_0,0]$. 

\smallskip

		In Case 1, we put $x=x_0$ and integrate \eqref{eq:2.12} in $y$ 
over $[x_0,\kappa_0]$ to obtain 
	\[
		u(\kappa_0) \leq u(x_0) + u'(x_0) (\kappa_0 - x_0) 
		- \frac{t}{4 \Lambda} (\kappa_0 - x_0)^2 
		\leq u(x_0) - \frac{t}{4 \Lambda} (\kappa_0 - x_0)^2. 
	\]
Hence, 
	\[
		\begin{aligned}
			\| u \|_{L^\infty ([-\kappa_0,\kappa_0]) } 
			& \geq u(x_0) \geq u(\kappa_0) + \frac{t}{4\Lambda} (\kappa_0-x_0)^2 
			\geq \frac{t}{4\Lambda} (\kappa_0-x_0)^2 
			\geq  \frac{\kappa_0^2 }{4 \Lambda} t. 
		\end{aligned}
	\]
Thus \eqref{eq:2.11} holds.

		In Case 2, putting $y=0$ in \eqref{eq:2.12}, 
it follows that 
	\[
		- \frac{t}{2 \Lambda} x \leq u'(x) \quad {\rm for\ every}\ x \in [-\kappa_0,0]. 
	\]
Integrating this inequality over $[-\kappa_0,0]$, we obtain 
	\[
		\frac{\kappa_0^2}{4 \Lambda} t \leq u(0) - u(-\kappa_0) 
		< u(0) \leq \| u \|_{L^\infty([-\kappa_0,\kappa_0])}.
	\]
Thus \eqref{eq:2.11} holds and we complete the proof. 
	\end{proof}

Next, we shall prove some properties of elements in $\cS_t^\pm$.

	\begin{lema}\label{defi:2.7}
		Let $ t \geq 0$ and $u \in \cS_t^\pm \setminus \{0\}$. Then either 
			
	\emph{(i)} 
		There exists an $x_0 \in \R$ such that 
		$u'(y) < 0 < u'(x) $ for all $x < x_0 < y$
	
	or else 
	
	\emph{(ii)} 
		There are $y_0 < 0 < z_0$ such that 
		$u'(y_0) = 0 = u'(z_0)$ and 
		$u'(x) \neq 0$ if $x \neq y_0, z_0$. 
	
	\noindent
	In particular, every $u \in \cS_t^\pm \setminus \{0\}$ 
	has only one maximum point in $\R$. 
	\end{lema}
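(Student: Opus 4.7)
The plan is to rule out all critical-point configurations inconsistent with (i) and (ii). Since $f(u)+t\varphi_0\ge 0$ by (f1) and $u\not\equiv 0$, the strong maximum principle applied to $-\cM^\pm(u'') + V(x)u \ge 0$ gives $u>0$ on $\R$; combined with the decay at $\pm\infty$ from Lemma~\ref{defi:2.2}, $u$ attains a global positive maximum. Failing (i) and (ii) means $u$ either possesses a strict local minimum somewhere in $\R$, or has at least three critical points (possibly including ``inflection-type'' critical points where $u'(x_c)=u''(x_c)=0$), or has exactly two critical points both lying on the same side of $0$. I would rule out each case in turn by the same core identity.

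The core step is to multiply the ODE $-\cM^\pm(u'')+Vu=f(u)+t\varphi_0$ by $u'$ and integrate between two consecutive critical points $p<q$. Writing $\cM^\pm(u'')=a(x)u''$ with $a(x)\in\{\lambda,\Lambda\}$ piecewise constant (changing values only at points where $u''$ vanishes), the left-hand side telescopes into a finite sum of jump terms of the form $\tfrac{1}{2}(a_i-a_{i+1})u'(r_i)^2$ at the transition points $r_i$, while integration by parts on the right-hand side yields
\[
G(q)-G(p)\;-\;\int_{p}^{q}\frac{V'(x)\,u(x)^2}{2}\,dx\;-\;t\,[\varphi_0 u]_{p}^{q}\;+\;t\int_{p}^{q}\varphi_0'(x)\,u(x)\,dx,
\]
where $G(x):=V(x)u(x)^2/2 - F(u(x))$ and $F(s):=\int_0^s f(\tau)\,d\tau$. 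When $p,q\ge 0$, (V2) gives $V'\ge 0$ on $[p,q]$ and the choice \eqref{eq:2.10} of $\varphi_0$ gives $\varphi_0'\le 0$ on $[0,\infty)$, so both interior integrals carry favorable signs; the symmetric situation $p,q\le 0$ is handled analogously using (V2) and the evenness of $\varphi_0$. Combining the resulting identity with the pointwise conditions at the critical points --- the inequalities $V(p)u(p)\ge f(u(p))+t\varphi_0(p)$ at a local minimum, $V(q)u(q)\le f(u(q))+t\varphi_0(q)$ at a local maximum, and the equality $V(x_c)u(x_c)=f(u(x_c))+t\varphi_0(x_c)$ at an inflection-type critical point --- together with the strict monotonicity of $s\mapsto f(s)/s$ from (f4), should produce a contradiction in each offending configuration.

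The main obstacle is the bookkeeping of the jump terms: $u''$ may change sign many times on $[p,q]$, and the $\varphi_0$-contributions on $[-2\kappa_0,2\kappa_0]$ introduce additional technicality. What makes the argument tractable is that at every transition point and at every inflection-type critical point the ODE forces the equality $Vu=f(u)+t\varphi_0$, which by (f4) pins down $u$ as an explicit function of $V$ and $\varphi_0$; combined with the strict monotonicity of $V$ on each half-line given by (V2), this rigidity is expected to make the sign analysis close in every case, leaving only the dichotomy (i)/(ii) as stated.
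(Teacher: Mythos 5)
Your proposal has a genuine gap: the identity you write down is correct, but the step where it is supposed to produce a contradiction in each forbidden configuration is precisely the whole difficulty, and it is left at the level of ``expected to close.'' Two concrete obstructions. First, the jump terms $\tfrac12(a_i-a_{i+1})u'(r_i)^2$ alternate in sign (for $\cM^+$ every transition of $u''$ from negative to positive produces an \emph{upward} jump of the piecewise energy, and symmetrically for $\cM^-$), and the number and location of these transitions is not controlled a priori: the zeros of $u''$ are the zeros of $h(x)=V(x)-f(u(x))/u(x)-t\varphi_0(x)/u(x)$, and $h$ is monotone only once you already know that $u$ is monotone on the relevant half-line --- which is the conclusion of Lemma~\ref{defi:2.7}. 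This is why the paper's own energy arguments (Steps 2--5 of Proposition~\ref{defi:2.9}, and the proof of Theorem~\ref{defi:1.3}) are deployed only \emph{after} Lemma~\ref{defi:2.7}: monotonicity of $u$ plus (f4) pins down exactly one sign change of $u''$ on each side, so the comparison can be arranged on intervals where $u''$ has one sign and no harmful jump ever enters. Using the same identity to \emph{prove} Lemma~\ref{defi:2.7} is therefore close to circular. Second, the pointwise information at critical points that you invoke ($Vu\ge f(u)+t\varphi_0$ at a minimum, $\le$ at a maximum, equality at inflection-type points) is perfectly consistent with the forbidden configurations, e.g.\ a max--min--max pattern in $[0,\infty)$ with $V$ nondecreasing and $f(s)/s$ increasing; so the identity between two \emph{consecutive} critical points cannot by itself yield a contradiction, and integrating out to $\pm\infty$ reintroduces the uncontrolled upward jumps in the tail. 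Even in the semilinear case $\lambda=\Lambda$ the cutoff term $t\varphi_0$ already spoils monotonicity of the natural energy on $[0,2\kappa_0]$ when $u'>0$ there.

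The paper avoids all of this by a different mechanism: assuming $u'(z_0)=0$ with $z_0\ge 0$, it reflects, setting $\tilde u(x)=u(z_0+|x|)$, $\tilde V(x)=V(z_0+|x|)$, $\tilde\varphi_0(x)=\varphi_0(z_0+|x|)$, and runs the moving plane method on $(0,\infty)$, using (V1)--(V3), \eqref{eq:2.10}, the inequality \eqref{eq:2.8} to get $-\cM^-(u_\lambda'')+(\tilde V-g_\lambda)u_\lambda\ge 0$ in $\Sigma_\lambda$, the strong maximum principle, $f'(0)=0$ and the decay at infinity. This global comparison of $u$ with its reflections is what forces $u'<0$ on $(z_0,\infty)$ (and symmetrically on the left), from which the dichotomy (i)/(ii) follows; no control of the sign changes of $u''$ is ever needed. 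If you want to salvage an energy-type route you would have to supply exactly that missing control of the transition points, and nothing in your sketch does so.
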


\begin{proof}

For $u \in \cS_t^\pm \setminus \{0\}$, 
it suffices to prove the following claim: 

\medskip

\noindent
{\bf Claim:} {\sl If $u'(z_0) = 0$ holds for some $z_0 \geq 0$, then 
$u'(x) < 0$ for every $x > z_0$. Similarly, if $u'(y_0) = 0$ holds for $y_0 \leq 0$, 
then $u'(x) >0$ for all $x < y_0$. In particular, 
each $u \in \cS_t^\pm \setminus \{0\}$ has at most one critical point in 
$[0,\infty]$ (resp. $(-\infty,0]$).}

\medskip

		We first remark that since $u(-x)$ satisfies 
the same type of equation by \eqref{eq:2.10} and (V1)--(V3), 
it is enough to prove the first assertion. To this end, suppose that 
$z_0 \geq 0$ satisfies $u'(z_0)=0$ and set, for all $x\in \R$, 
\[
	\tilde u(x)=u(z_0+|x|),\quad \tilde V(x)=V(z_0+|x|) 
	\quad\mbox{and}\quad  \tilde \varphi_0(x)=\varphi_0(z_0+|x|).
\]
Then, since $u'(z_0)=0$ and $z_0\ge 0$, $\tilde u\in C^1(\R)\cap C^2(\R\setminus \{0\})$, 
$\tilde u(x)\to 0$ as $|x|\to \infty$. 
Moreover $\tilde u$, $\tilde V$ and $\tilde \varphi_0$ are even and
$\tilde V'(x)\ge 0$, $\tilde \varphi_0'(x)\le 0 $ for a.a. $x \ge 0$ and 
$$
-\cM^{\pm}(\tilde u'')+\tilde V \tilde u=f(\tilde u)+t\tilde \varphi_0\quad\mbox{in}\quad \R\setminus\{0\}.
$$
Furthermore, by the differential equations and $u\in C^2(\R)$, 
we have $\tilde u\in C^2(\R)$ and the equation above is satisfied in $\R$.

	We shall prove Claim by the moving plane method. 
For $\lambda>0$, 
define $x_\lambda=2\lambda-x$, $\Sigma_\lambda=\{x\in(0,\infty)\,|\, \lambda<x\}$ and
$$
u_\lambda(x)=\tilde u(x_\lambda)-\tilde u(x),\quad \varphi_\lambda(x)= \tilde \varphi_0(x_\lambda)-\tilde \varphi_0(x).
$$
Since
\begin{eqnarray*}
-\cM^{\pm}(\tilde u'')(x_\lambda)+\tilde V(x_\lambda) \tilde u(x_\lambda)&=&f(\tilde u(x_\lambda))+t\tilde \varphi_0(x_\lambda)\\
-\cM^{\pm}(\tilde u'')(x)+\tilde V(x) \tilde u(x) &=&f(\tilde u(x))+t\tilde \varphi_0(x),
\end{eqnarray*}
we have
\begin{eqnarray*}
&&-\left(\cM^{\pm}(\tilde u'')(x_\lambda)-\cM^{\pm}(\tilde u'')(x)\right) 
+ \left(\tilde V(x_\lambda)-\tilde V(x) \right) \tilde u(x_\lambda) 
+\tilde V(x)u_\lambda
\\
&=& f(\tilde u(x_\lambda))-f(\tilde u(x))+t(\tilde \varphi_0(x_\lambda)-\tilde \varphi_0(x)).
\end{eqnarray*}
Noting \eqref{eq:2.8}, $|x_\lambda|\le |x|$, $\tilde V(x_\lambda)\le \tilde V(x)$ and 
$\tilde{\varphi}_0(x) \le \tilde{\varphi}_0(x_\lambda)$ for all $x\in\Sigma_\lambda$, we have
$$
-\cM^-(u_\lambda'')+\tilde V(x)u_\lambda\ge f(\tilde u(x_\lambda))-f(\tilde u(x))\quad\mbox{in}\quad \Sigma_\lambda.
$$
Moreover, from
\begin{eqnarray*}
f(\tilde u(x_\lambda))-f(\tilde u(x))
&=&\int_0^1f'(\tilde u(x)+\theta u_\lambda(x))\rd\theta u_\lambda(x)\\
&=:&g_\lambda(x)u_\lambda(x),
\end{eqnarray*}
we have
$$
-\cM^-(u_\lambda'')+(\tilde V(x)-g_\lambda(x))u_\lambda(x) \ge 0 \quad\mbox{in}\quad \Sigma_\lambda.
$$
Since $\tilde u>0$ in $\R$, $f'(0) = 0$ by (f2) and $\tilde u(x)\to 0$ as $x\to\infty$, 
it is not difficult to see that 
the strong maximum principle implies that 
for all $\lambda$ sufficiently large,
\begin{equation}\label{eq:2.13}
u_\lambda>0\quad\mbox{in}\quad  \Sigma_\lambda, \quad 
u_\lambda'(\lambda)=-2\tilde u'(\lambda) = -2 u'(z_0+\lambda)>0. 
\end{equation}

	Next, set
$$
\lambda_*=\inf\{\lambda>0\,|\, u_{\tilde \lambda}>0 \quad\mbox{in}\quad 
\Sigma_{\tilde \lambda}\quad\mbox{for all}\quad \tilde{\lambda}>\lambda\}.
$$
From the above observation, we have $0 \leq \lambda_* < \infty$. 
In addition, notice that if $u_\lambda \geq 0$ in $\Sigma_\lambda$, then 
\begin{eqnarray*}
0&\le&-\cM^-(u_{\lambda}'')+\left(\tilde V(x)-g_{\lambda}(x)\right)u_{\lambda}(x)\\
& \le&  
-\cM^-(u_{\lambda}'')+\left(\tilde V(x)-g_{\lambda}(x)\right)_+u_{\lambda}(x)
\quad\mbox{in}\quad \Sigma_{\lambda}.
\end{eqnarray*}
In particular, since $u_{\lambda_*} \geq 0$ in $\Sigma_{\lambda_\ast}$, 
the strong maximum principle yields either
\begin{enumerate}
\item[(i)] $u_{\lambda_*}>0$ in $\Sigma_{\lambda_*},$ $u_{\lambda_*}'(\lambda_*)>0$

or else
\item[(ii)] $u_{\lambda_*}\equiv 0$ in $\Sigma_{\lambda_*}$.
\end{enumerate}

Next we prove
that if $\mu>0$ and $u_\mu>0$ in $\Sigma_\mu$ hold, 
then there exists an $\epsilon_\mu > 0$ such that
$u_{\tilde \mu} > 0$ in $\Sigma_{\tilde \mu}$ provided $|\mu-\tilde \mu|< \epsilon_\mu$.
To see this, we remark that
$$
u_{\tilde \mu}\to u_{ \mu}\quad\mbox{in}\quad C^1_{\rm loc}([\mu,\infty)).
$$
Since $u_{\mu}'(\mu) > 0$ holds due to $u_\mu > 0$ in $\Sigma_\mu$ 
and the strong maximum principle, 
for sufficiently small $\epsilon_\mu$, we observe that
$|\mu-\tilde \mu|<\epsilon_\mu$ implies $u_{\tilde \mu}>0$ in $(\tilde \mu, R_\epsilon)$ where
$R_\epsilon>0$ is chosen so that $x\ge R_\epsilon$ implies $g_{\tilde\mu}(x)\le V_0/2$.

From
$$
-\cM^-(u_{\tilde \mu}'')+(V-V_0/2)u_{\tilde\mu}\ge 0\quad\mbox{in}\quad [R_\epsilon,\infty),
\quad u_{\tilde \mu}(x) \to 0 \quad {\rm as} \ x \to \infty
$$
and the strong maximum principle, 
$u_{\tilde \mu}$ cannot take a non-positive minimum. Hence
$|\mu-\tilde \mu|<\epsilon_\mu$ implies $u_{\tilde \mu}>0$ in $\Sigma_{\tilde\mu}$ and $u_{\tilde \mu}'(\tilde\mu)>0$.

By this claim we see that if $u_{\lambda_*}>0$ in $\Sigma_{\lambda_*}$, 
then $\lambda_*=0.$ Thus, $\lambda_*=0$ holds provided (i) occurs. 
Moreover, we also see from \eqref{eq:2.13} that $\tilde u'(x) < 0$ for all $x > 0$.

On the other hand, let us consider the case  $\lambda_*>0$ and $u_{\lambda_*}\equiv 0$ in $\Sigma_{\lambda_*}$. In this case, we notice that
$-2\tilde u'(\lambda)=u_\lambda'(\lambda)>0$ for all $\lambda>\lambda_*$ and 
$\tilde u(2\lambda_*-x)=\tilde u(x)$ for all $x\ge \lambda_*$.
Since $\tilde u'(0)=0$, we have $\tilde u'(2\lambda_*)=0,$ which is a contradiction. 
Hence, (ii) only occurs when $\lambda_*=0$ and it follows from \eqref{eq:2.13} that 
$\tilde u'(x) < 0$ for all $x > 0$. 

By the above observations, we obtain $\lambda_*=0$ and $\tilde u'(x)<0$ for all $x>0$, 
which implies $u'(x)<0$ for all $x>z_0$. Thus we complete the proof.
 \end{proof}

	\begin{lema}\label{defi:2.8}
		There exists an $M_0 > 0$ such that 
			\[
				\| u \|_{L^\infty(\R)} \leq M_0 \quad 
				{\rm for \ each} \ u \in \cS_t^\pm \quad {\rm and} \quad 
				t \geq 0.
			\]
	\end{lema}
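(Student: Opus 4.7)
The plan is a blow-up argument at the maximum. Suppose, for contradiction, that there exist $t_n \geq 0$ and $u_n \in \cS^\pm_{t_n}$ with $M_n := \|u_n\|_{L^\infty(\R)} = u_n(x_n) \to \infty$, where $x_n$ is the unique maximum point of $u_n$ provided by Lemma \ref{defi:2.7} (in particular $u_n'(x_n)=0$). Since $\varphi_0 \equiv 1$ on $[-\kappa_0,\kappa_0]$, Lemma \ref{defi:2.6} gives $M_n \geq \kappa_0^2 t_n/(4\Lambda)$ as soon as $t_n \geq \tilde t$, so $t_n = O(M_n)$; combined with the superlinearity $f(M_n)/M_n \to \infty$ from (f3), this yields $t_n/f(M_n) \to 0$.

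Set $\alpha_n := \sqrt{M_n/f(M_n)} \to 0$ and $v_n(y) := u_n(x_n + \alpha_n y)/M_n$. Then $0 \leq v_n \leq 1$, $v_n(0) = 1$, $v_n'(0) = 0$, and a direct computation gives
\[
-\cM^\pm(v_n'') + \alpha_n^2 V(x_n + \alpha_n y)\, v_n \;=\; \frac{f(M_n v_n)}{f(M_n)} + \frac{t_n}{f(M_n)}\,\varphi_0(x_n + \alpha_n y).
\]
By (f1) and (f4), $f$ is nondecreasing on $[0,\infty)$, so $f(M_n v_n)/f(M_n) \in [0,1]$, and every other term is uniformly bounded in $n,y$. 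Using the uniform ellipticity $\lambda|s| \leq |\cM^\pm(s)|$, this yields $\|v_n''\|_{L^\infty(\R)} \leq C$, so Arzel\`a--Ascoli produces a subsequence with $v_n \to v$ in $C^1_{\mathrm{loc}}(\R)$, where $0 \leq v \leq 1$, $v(0) = 1$, $v'(0) = 0$.

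The coefficients $\alpha_n^2 V v_n$ and $(t_n/f(M_n))\varphi_0$ vanish uniformly, so the right-hand side is bounded below by $-\alpha_n^2 \|V\|_\infty \to 0$. Stability of viscosity supersolutions then gives $-\cM^\pm(v'') \geq 0$ on $\R$, i.e.\ $v'' \leq 0$: $v$ is concave. A nonnegative concave function on $\R$ attaining its maximum $1$ at the origin must be identically $1$: if $v(x_1)<1$ for some $x_1>0$, concavity across $0,x_1,x$ for $x>x_1$ gives $v(x) \leq 1 + (x/x_1)(v(x_1)-1) \to -\infty$ as $x\to\infty$, contradicting $v\geq 0$; the case $x_1<0$ is symmetric. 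Hence $v \equiv 1$.

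Now $v_n \to 1$ locally uniformly, so by (f3) the first term on the right of the rescaled equation converges to $\bar f(1)=1$ uniformly on compact sets, while the remaining terms vanish. Since $\cM^\pm \colon \R \to \R$ is a homeomorphism, inversion yields $v_n'' \to -1/\lambda$ (for $\cM^+$) or $v_n'' \to -1/\Lambda$ (for $\cM^-$) uniformly on compact sets. Integrating from $0$ to a fixed $\delta>0$ and using $v_n'(0)=0$ gives $v_n'(\delta) \to -\delta/\lambda \neq 0$; on the other hand, $v_n \to 1$ in $C^1_{\mathrm{loc}}$ forces $v_n'(\delta) \to 0$. This absurdity yields the desired uniform bound $M_0$. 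The principal obstacle is the blow-up passage to the limit in the Pucci operator under only $C^1_{\mathrm{loc}}$ compactness, and the subsequent conversion of the soft statement $v\equiv 1$ into a second-order contradiction; both rest on the uniform ellipticity of $\cM^\pm$ and its invertibility on $\R$.
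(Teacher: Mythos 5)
Your proof is correct, and it shares the paper's skeleton: argue by contradiction, rescale at the unique maximum point (Lemma \ref{defi:2.7}) with the scale $\sqrt{\tau_n/f(\tau_n)}$, and use Lemma \ref{defi:2.6} to force $t_n/f(\tau_n)\to 0$; the rescaled equation, the bound $f(\tau_n v_n)/f(\tau_n)\in[0,1]$ from (f4), and the $C^1_{\rm loc}$ limit $v$ with $v(0)=1$, $v'(0)=0$, $0\le v\le 1$ are exactly as in the paper. Where you diverge is the endgame. The paper upgrades to $C^2_{\rm loc}$ convergence on the connected component $I$ of $[v_0>0]$ containing the origin, obtains the limit equation $-\cM^\pm(v_0'')=\bar f(v_0)$ there, and concludes that $v_0$ would have to cross zero with strictly negative slope (whether $I$ is bounded on the right or not), contradicting $0\le v_0\le 1$. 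You instead exploit only the one-sided inequality $-\cM^\pm(v_n'')\ge-\alpha_n^2\|V\|_{L^\infty}$ to show the limit is concave on all of $\R$, hence identically $1$, and then contradict $v_n'\to 0$ by noting $f(M_n v_n)/f(M_n)\to\bar f(1)=1$ locally uniformly and inverting $\cM^\pm$ to get $v_n''\to -1/\lambda$ (resp. $-1/\Lambda$). This buys you a cleaner conclusion: no decomposition of the positivity set, no case analysis on $I$, and second-derivative convergence is needed only at the last step where the right-hand side converges uniformly. The appeal to viscosity stability is correct but not even necessary: since $\cM^\pm(s)\ge\lambda s$ for $s\ge0$, the inequality $\cM^\pm(v_n'')\le\alpha_n^2\|V\|_{L^\infty}$ gives the pointwise bound $v_n''\le\alpha_n^2\|V\|_{L^\infty}/\lambda$, so $v_n(y)-\alpha_n^2\|V\|_{L^\infty}y^2/(2\lambda)$ is concave and the locally uniform limit $v$ is concave by elementary means. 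Both proofs ultimately rest on the same two facts: Lemma \ref{defi:2.6} kills the inhomogeneous term after rescaling, and $\bar f(1)=1>0$ forces strictly negative curvature near the maximum of the blow-up profile.
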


	\begin{proof}
We argue by contradiction and suppose that 
there are $(s_n) \subset [0,\infty)$ and 
$u_n \in \cS_{s_n}^\pm$ such that $\tau_n := \| u_n \|_{L^\infty(\R)} \to \infty$. 
Thanks to Lemma \ref{defi:2.7}, 
let $(x_n) \subset \R$ be a unique maximum point of $(u_n)$ and 
set 
	\[
	\begin{aligned}
		v_n(x) &:= \frac{1}{\tau_n} u_n \left( x_n + \sqrt{\frac{\tau_n}{f(\tau_n)}} x \right),
		\quad
		\varphi_n(x) := \varphi_0\left( x_n + \sqrt{\frac{\tau_n}{f(\tau_n)}} x \right), \\
		V_n(x) &:= V\left( x_n + \sqrt{\frac{\tau_n}{f(\tau_n)}} x \right).
	\end{aligned}
	\]
Then $v_n$ satisfies 
	\[
		v_n(x) \leq v_n(0)=1 , \quad 
		- \cM^\pm (v_n'') + \frac{\tau_n}{f(\tau_n)} V_n v_n 
		= \frac{f(\tau_n v_n)}{f(\tau_n)} + \frac{s_n}{f(\tau_n)} \varphi_n
		\quad {\rm in}\ \R.
	\]
Recalling Lemma \ref{defi:2.6}, we have  
	\[
		\frac{\kappa_0^2}{4 \Lambda} s_n 
		- \frac{\kappa_0^2}{4 \Lambda} \tilde{t}
		\leq \| u_n \|_{L^\infty(\R)} = \tau_n 
		\quad {\rm for\ all} \ n.
	\]
Hence, by (f3), $s_n / f(\tau_n) \to 0$ as $n \to \infty$. 
Moreover, noting that $f(s)$ is increasing in $[0,\infty)$ by (f4), 
it follows from $v_n (x) \leq 1$ that 
	\[
		\left| \frac{f(\tau_n v_n)}{f(\tau_n)} \right| 
		\leq 1 \quad {\rm in} \ \R.
	\]
Noting $v_n'(0) = 0$ and $\tau_n / f(\tau_n) \to 0$, 
we may extract a subsequence (still denoted by $(n)$) such that 
	\[
		v_n \to v_0 \quad {\rm \in} \ C^1_{\rm loc}(\R), \quad 
		0 \leq v_0 \leq 1 \quad {\rm in} \ \R, \quad 
		v_0(0)= 1, \quad v_0'(0) = 0.
	\]
Furthermore, by $0 \leq v_n \leq 1$ and (f3), we have 
	\[
		\frac{f(\tau_n v_n)}{f(\tau_n)}
		 \to \bar{f}(v_0)\quad 
		{\rm in} \ C_{\rm loc}([ v_0 > 0 ]). 
	\]
Since $0 \in [v_0>0]$, let $I$ be a component of $[v_0>0]$ 
satisfying $0 \in I$. Then we have 
	\[
		v_n \to v_0 \quad {\rm in}\ C^2_{\rm loc}(I), \quad 
		- \cM^\pm (v_0'') = \bar{f}(v_0) \quad {\rm in}\ I.
	\]
When $I=(-c_2,c_1)$ and $c_1 < \infty$, since $v_0'(0)=0$, 
$v_0(0) = 1$, $\bar{f} \geq 0$, $\bar{f}(1) = 1 > 0$ 
and $v_0(c_1) = 0$, 
we observe that $v_0'(c_1) < 0$, however, 
this contradicts $ 0 \leq v_0 \leq 1$ in $\R$.

		On the other hand, if $I=(-c_2,\infty)$, then 
by $v_0'(0)=0$, $v_0(0)=1$ and $-\cM^\pm(v_0'') =  \bar{f}(v_0)$, 
we observe that $v_0$ must hit a zero at some $x_0>0$ with 
$v_0'(x_0) < 0$, however this contradicts $0 \leq v_0 \leq 1$ again. 
Hence, Lemma \ref{defi:2.8} holds and  we complete the proof. 
	\end{proof}

The next proposition is a key in order to prove  
${\rm deg}_{X_{\eta_1}}( {\rm id} - \mathcal{L}^\pm , B_{r_1}(0), 0  ) = 0$ 
for some $r_1>r_0$.

	\begin{prop}\label{defi:2.9}
		There exists an $M_1>0$ such that 
			\[
				\| u \|_{X_{\eta_1}} \leq M_1
				\quad {\rm for\ each} \ 
				u \in \cS_t^\pm \quad {\rm and} \quad 
				t \geq 0.
			\]
	\end{prop}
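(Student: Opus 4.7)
The plan is to bootstrap from the uniform $L^\infty$-estimate of Lemma~\ref{defi:2.8} to the weighted estimate in two stages: first, prove that the family $\bigcup_{t \geq 0} \cS_t^\pm$ is uniformly small outside a common compact set; then invoke the supersolution construction of Lemma~\ref{defi:2.2} to promote this to exponential decay. Two preliminaries are immediate: Lemma~\ref{defi:2.6} combined with $\|u\|_\infty \leq M_0$ forces $t \leq T_0 := \max\{\tilde t,\, 4\Lambda M_0/\kappa_0^2\}$, and substituting back into the equation (using $|\cM^\pm(s)| \geq \lambda|s|$) produces a uniform $C^2(\R)$-bound on the whole family.

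The main claim is that there exists $R_0 > 0$, depending only on the data, such that $u(x) \leq \delta_0$ for all $|x| > R_0$ and all $u \in \cS_t^\pm$ with $t \in [0, T_0]$, where $\delta_0$ is fixed via (f2) so that $f(s) \leq (V_0/4)\,s$ on $[0, \delta_0]$. I would argue by contradiction: pick $\delta > 0$, $u_n \in \cS_{t_n}^\pm$, $x_n \to +\infty$ (WLOG) with $u_n(x_n) \geq \delta$, and shift $v_n(y) := u_n(x_n + y)$. Along a subsequence, $v_n \to v_\infty$ in $C^2_{\rm loc}(\R)$; since $V(x_n + y) \to V_\infty$ and $\varphi_0(x_n + y) \to 0$ uniformly on compact $y$-sets, the limit satisfies the autonomous equation $-\cM^\pm(v_\infty'') + V_\infty v_\infty = f(v_\infty)$, with $0 \leq v_\infty \leq M_0$ and $v_\infty(0) \geq \delta$. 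The structural information from Lemma~\ref{defi:2.7} passes to the limit: writing $x_n^*$ for the unique maximum of $u_n$, the subsequential behavior of $x_n^* - x_n$ makes $v_\infty$ either monotone on $\R$ or unimodal with monotone tails.

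If $v_\infty$ is monotone, its $\pm\infty$-limits are zeros of $s \mapsto V_\infty s - f(s)$, i.e., $0$ or the unique positive root $s_*$, with at least one equal to $s_*$ since $v_\infty(0) \geq \delta$. Multiplying the ODE by $v_\infty'$---legitimate because $V_\infty v_\infty > f(v_\infty)$ on $\{0 < v_\infty < s_*\}$ by (f4), which pins down the sign of $v_\infty''$ and hence fixes the branch of $\cM^\pm$---produces an energy identity that would force $V_\infty s_*^2/2 - F(s_*) = 0$, contradicting the strict inequality $V_\infty s_*^2/2 > F(s_*)$ that follows from (f4). In the unimodal case, $v_\infty$ is either a translate of $\omega_\pm$ from Proposition~\ref{defi:2.1} or has a positive limit at one infinity (which reduces to the monotone case); the translated-$\omega_\pm$ subcase is eliminated by comparing $u_n$ with $\omega_\pm(\cdot - x_n^*)$ and exploiting the quantitative decay $V_\infty - V(x) \leq C_0 \exp(-2\sqrt{V_\infty/\Lambda + \xi_0}\,|x|)$ from (V3) to force $x_n^*$ to remain bounded.

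Once the uniform smallness is secured, the rest replays the proof of Lemma~\ref{defi:2.2}: for $\tilde R := \max(R_0, 2\kappa_0)$, on $\{|x| \geq \tilde R\}$ one has $\varphi_0 \equiv 0$ and $f(u) \leq (V_0/4)u$, so $u$ is a subsolution of $-\cM^\pm L + (V(x) - V_0/4)L = 0$, while the barrier $\omega(x) := M_0 e^{-\eta_1(|x| - \tilde R)}$ satisfies $-\cM^\pm(\omega'') + (V(x) - V_0/4)\omega \geq (3V_0/4 - \Lambda \eta_1^2)\omega > 0$ by \eqref{eq:2.3}; comparing $\omega - u$ through \eqref{eq:2.8} and the standard maximum principle yields $u \leq \omega$ on $\{|x| \geq \tilde R\}$, and together with $\|u\|_\infty \leq M_0$ on $[-\tilde R, \tilde R]$ we conclude $\|u\|_{X_{\eta_1}} \leq M_0 e^{\eta_1 \tilde R} =: M_1$. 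The principal obstacle is the uniform smallness step, and specifically the subcase where the maximum of $u_n$ drifts to infinity together with $x_n$; ruling out a shifted ground state in the limit is precisely where the quantitative tail estimate (V3) becomes essential.
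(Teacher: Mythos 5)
Your Stage 2 (the barrier argument promoting ``uniform smallness outside a compact set'' to a uniform bound in $X_{\eta_1}$) and your preliminaries ($t\le T_0$ via Lemma \ref{defi:2.6} and Lemma \ref{defi:2.8}, uniform $C^2$ bounds) are fine and match the end of the paper's proof. The gap is in Stage 1, at exactly the point you yourself identify as ``the principal obstacle'': ruling out a translated ground state drifting to infinity. You dispose of it in one clause --- ``comparing $u_n$ with $\omega_\pm(\cdot-x_n^*)$ and exploiting the quantitative decay \ldots from (V3) to force $x_n^*$ to remain bounded'' --- but this is naming the hypothesis, not giving an argument, and no naive comparison can work: since $V\le V_\infty$, each $u_n$ is a supersolution of the autonomous equation, so sub/supersolution or sliding comparisons with $\omega_\pm$ do not obstruct a bubble escaping to $+\infty$; indeed, under the monotone hypothesis (V2') such escape does happen and Theorem \ref{defi:1.3} gives nonexistence, so any proof must use the two-sided structure of (V2) together with the precise rate in (V3) quantitatively. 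In the paper this single step is Steps 2--7: one introduces the energies $E_{n,\infty,\pm}$, locates the two inflection points $z_n^\pm>0$ and $y_n^\pm<0$ on either side of the maximum with $v_n(y_n^\pm)=v_n(z_n^\pm)$, proves $E_{n,\infty,\pm}(z_n^\pm)\le E_{n,\infty,\pm}(y_n^\pm)$ by a change-of-variables argument using the monotonicity of $V$ (Step 3), bounds $0\le -E_{n,\infty,\pm}(z_n^\pm)\le c\exp\bigl(-2x_n\sqrt{V_\infty/\Lambda+\xi_0}\bigr)$ using (V3) and the uniform decay of $v_n$ (Step 2), and then, via a lower barrier for $v_n$ and $v_n'$ on the far left (Step 6) together with \eqref{eq:2.9}, gets the competing lower bound $-E_{n,\infty,\pm}(y_n^\pm)\ge c\exp\bigl(-2x_n\sqrt{V_\infty/\Lambda+\xi_0/(2(\Lambda+1))}\bigr)$; the mismatch of exponential rates is the contradiction. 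None of this machinery, nor any substitute for it, appears in your proposal.

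A second, smaller gap: your dichotomy ``monotone heteroclinic or unimodal'' does not exhaust the possible blow-up limits, because the limit can be constant $\equiv s_\infty$ on a half-line (or globally), and constants are admissible solutions of the autonomous equation, so the limit-level energy identity you invoke gives no contradiction there. This degenerate case genuinely occurs as a candidate limit (in the paper's Step 1 one is forced to $v_0\equiv s_\infty$ on $[0,\infty)$ when $v_{0,\infty}=s_\infty$), and it is excluded only by returning to the sequence: one shows $v_n(z_n^\pm)\to s_\infty$ at the inflection points while $|v_n'(z_n^\pm)|$ stays bounded away from $0$ (via $E_{n,\pm}(z_n^\pm)\ge 0$ and $G_\infty(s_\infty)<0$), whence $v_n(0)\ge s_\infty+\delta_1\delta_2$, contradicting $v_n(0)\to s_\infty$. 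The same sequence-level argument is needed again at the end of the proof (the case $u_\infty=s_\infty$), which your write-up also omits. So the overall architecture is reasonable, but the two hard analytic steps of the proposition are asserted rather than proved.
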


Assuming Proposition \ref{defi:2.9}, we first prove Theorem \ref{defi:1.2}. 
Before the proof, we remark that 
for every $t \geq 0$ and $v \in X_{\eta_1}$, the equations
	\[
		- \cM^\pm(u'') + V(x) u 
		= f(v) + t \varphi_0 \quad {\rm in} \ \R
	\]
have unique solutions in $X_{\eta_1}$. Indeed, we may prove this claim 
in a similar way to the proof of Lemma \ref{defi:2.2} thanks to 
$\varphi_0 \in C^\infty_0(\R)$. 
Thus, we denote by $F^\pm(t,v)$ these unique solutions. 
Furthermore, we may show that the maps 
$(t,v) \mapsto F^\pm(t,v) : [0,\infty) \times X_{\eta_1} \to X_{\eta_1}$ 
are compact as in Lemma \ref{defi:2.4}.

	Now we prove Theorem \ref{defi:1.2}.

	\begin{proof}[Proof of Theorem \ref{defi:1.2}]
Choose $r_1 := M_1+\frac{\kappa_0^2}{4\Lambda} \tilde{t} + r_0$ where 
$M_1$, $\tilde{t}$ and $r_0$ appear in Proposition \ref{defi:2.9} and 
Lemmas \ref{defi:2.6} and \ref{defi:2.5}. We first claim that 
	\begin{equation}\label{eq:2.14}
		u - F^\pm (t_1, u) \neq 0 \quad {\rm in}\ 
		\overline{B_{r_1}}
	\end{equation}
where $t_1>\tilde{t}$ is chosen so that $\frac{\kappa_0^2}{4\Lambda} t_1 > r_1$. 
Indeed, let $u \in \overline{B_{r_1}}$ satisfy $ u - F^\pm(t_1,u) = 0$. 
Noting $u \in \cS^\pm_{t_1}$ and $t_1 > \tilde{t}$, 
Lemma \ref{defi:2.6} gives a contradiction:
	\[
		r_1 \geq \| u \|_{X_{\eta_1}} \geq \| u \|_{L^\infty(\R)} 
		\geq \frac{\kappa_0^2}{4\Lambda}t_1 > r_1. 
	\]
Hence, \eqref{eq:2.14} holds.

	Since Proposition \ref{defi:2.9} and the choice of $r_1$ imply
	\[
		u - F^\pm(t,u) \neq 0 \quad {\rm on} \ \partial B_{r_1} 
		\quad {\rm for\ every} \ t \geq 0,
	\]
it is easily seen from \eqref{eq:2.14} and the homotopy invariance of degree that 
	\[
		{\rm deg}_{X_{\eta_1}}( {\rm id} - \mathcal{L}^\pm, B_{r_1}(0), 0  ) = 0.
	\] 
Combining this with Lemma \ref{defi:2.5}, we obtain 
	\[
		{\rm deg}_{X_{\eta_1}} ( {\rm id} - \mathcal{L}^\pm, A_{r_0,r_1}, 0 ) = -1
	\]
and solutions of \eqref{eq:1.1} 
in $A_{r_0,r_1}$. 
This completes the proof. 
	\end{proof}

		Before proceeding to the proof of Proposition \ref{defi:2.9}, 
we remark the following fact on the function 
$g_\infty(s) := f(s) - V_\infty s$, which will be used below. 

\medskip

\noindent
{\bf Fact:} {\sl There exists a unique $s_\infty >0$ such that 
	\begin{equation}\label{eq:2.15}
		g_\infty(s) < 0 = g_\infty(s_\infty) < g_\infty(t) 
		\quad \text{for all $ 0 < s < s_\infty < t$}. 
	\end{equation}
}

\medskip 

This fact follows from (f1)--(f4). In fact, for sufficiently small $s > 0$, 
by (f2), we get $g_\infty(s) < 0$. On the other hand, (f3) yields 
$g_\infty(s) \to \infty$ as $s \to \infty$, hence, 
there exists an $s_\infty > 0$ so that $g_\infty(s_\infty) = 0$. 
Moreover, from 
	\[
		g_\infty(s) = s \left( \frac{f(s)}{s} - V_\infty \right)
	\]
and (f4), we see that \eqref{eq:2.15} holds. 

\medskip

Now we prove Proposition \ref{defi:2.9}. 

	\begin{proof}[Proof of Proposition \ref{defi:2.9}]
We argue indirectly and suppose that 
there exists 
$(s_n,u_n) \in [0,\infty) \times X_{\eta_1}$ such that 
$u_n \in \cS^\pm_{s_n}$ and 
$ \| u_n \|_{X_{\eta_1}} \to \infty$. Remark that 
$u_n$ satisfies 
	\[
		- \cM^\pm (u_n'') + V u_n = f(u_n) 
		+ s_n \varphi_0 \quad {\rm in}\ \R.
	\]
By Lemma \ref{defi:2.7}, $u_n$ has only one maximum point and 
denote it by $x_n$. Our first aim is to show 
	\begin{equation}\label{eq:2.16}
		\text{$(x_n)$ is bounded.}
	\end{equation}

		To prove \eqref{eq:2.16}, suppose that $x_n \to \infty$. We may assume 
$3 \kappa_0 < x_n$. Setting
	\[
		v_n(x) := u_n(x + x_n), \quad 
		V_n(x) := V(x+ x_n), \quad 
		\varphi_n(x) := \varphi_0(x + x_n),
	\]
we see $\varphi_n \equiv 0$ in $[0,\infty)$ thanks to $3 \kappa_0 < x_n$. 
Furthermore, by Lemma \ref{defi:2.7}, we have 
	\[
		\begin{aligned}
			&-\cM^\pm (v_n'') + V_n v_n = f(v_n) + s_n \varphi_n \quad 
			{\rm in} \ \R, \quad v_n(0) = \max_{\R} v_n > 0,
			\\
			& v_n'(y) \leq 0 \leq v_n'(x) \quad 
			{\rm for}\ x < 0 < y, \quad V_n \to V_\infty, \ \ s_n \varphi_n \to 0 \quad 
			{\rm in \ } C_{\rm loc}(\R). 
		\end{aligned}
	\]
In the sequel, we divide our arguments into several steps. 

\medskip

\noindent
{\bf Step 1:} {\sl One has 
	\begin{equation}\label{eq:2.17}
		v_n \to \omega_{\pm} \quad {\rm strongly \ in}\ C^2_{\rm loc}(\R)
	\end{equation}
where $\omega_\pm$ are unique solutions of 
\eqref{eq:2.1} and \eqref{eq:2.2} (see Proposition \ref{defi:2.1}). 
}

\medskip

		We first notice that $(v_n)$ is bounded in $L^\infty(\R)$ 
due to Lemma \ref{defi:2.8}. Combining with $V_n \to V_\infty$ and 
$s_n \varphi_n \to 0$ in $C_{\rm loc}(\R)$, 
we may extract a subsequence (still denoted by $(n)$) so that 
	\[
		\begin{aligned}
			& v_n \to v_0 \quad {\rm in}\ C^2_{\rm loc}(\R), \quad 
			- \cM^\pm (v_0'') + V_\infty v_0 = f(v_0) \quad {\rm in}\ \R, 
			\\
			& v_0(0)=\max_{\R} v_0, \quad 
			0 \leq v_0 \quad {\rm in}\ \R, \quad 
			v_0'(y) \leq 0 \leq v_0'(x) \quad 
			{\rm for\ } x < 0 < y. 
		\end{aligned}
	\]
By $v_n(0) = \max_{\R} v_n$, we have $v_n''(0) \leq 0$. 
Since $v_n(0) > 0$ and $\varphi_n \equiv 0$ on $[0,\infty)$, we get 
	\[
		f(v_n(0)) = - \cM^\pm (v_n''(0)) + V_n(0) v_n(0) 
		\geq V_n(0) v_n(0),
	\]
which implies 
	\[
		V_n(0) \leq \frac{f(v_n(0))}{v_n(0)}. 
	\]
By $V_n(0) \to V_\infty$ and (f2), we may find a $\delta_0 > 0$ so that 
$v_n(0) \geq \delta_0$ for all $n$. Thus $v_0(0) \geq \delta_0$ and 
$v_0 > 0$ in $\R$. 
Now from $v_0'(y) \leq 0$ in $[0,\infty)$, one has 
	\[
		v_{0,\infty} := \lim_{x \to \infty} v_0(y) \geq 0.
	\]
Since $-\cM^\pm (v_0'') = g_\infty(v_0)$ in $\R$, it follows from \eqref{eq:2.15} that 
	\[
		\text{either $v_{0,\infty} = 0$ or $v_{0,\infty} = s_\infty > 0$.}
	\]
If $v_{0,\infty} = 0$, then by Proposition \ref{defi:2.1}, we have $v_0 = \omega_\pm$ 
and Step 1 holds.

		Now we assume $v_{0,\infty} = s_\infty$. 
By $v_0' \leq 0$ in $[0,\infty)$ and \eqref{eq:2.15}, 
we have $v_0 \geq s_\infty$ in $[0,\infty)$ and 
	\[
		- \cM^\pm(v_0'') = g_\infty(v_0) \geq 0 \quad {\rm on}\ [0,\infty). 
	\]
Moreover, if $v_0(0) > s_\infty$, then the strict inequality holds at $x=0$. 
However, this contradicts facts $v_0'(0)=0 > v_0''(0)$, 
$v_0''(x) \leq 0$ for $x \in [0,\infty)$ and 
$v_0(x) \to s_\infty$ as $x\to \infty$. 
Thus we get $v_0 \equiv s_\infty$ in $[0,\infty)$.

	Next, we put 
	\[
		\begin{aligned}
			&
			E_{n,+}(x) := \frac{\Lambda}{2} (v_n'(x))^2 
			+ F(v_n(x)) - \frac{V_n}{2} v_n^2 
			& &\text{for $\cM^+$},
			\\
			&
			E_{n,-}(x) := \frac{\lambda}{2} (v_n'(x))^2 
			+ F(v_n(x)) - \frac{V_n}{2} v_n^2
			& &\text{for $\cM^-$}. 
		\end{aligned}
	\]
We also put $h_n(x) := V_n(x) - f(v_n(x)) / v_n(x)$. 
Recalling $V_n(x) = V(x+x_n)$ and $x_n \to \infty$, 
we may assume that $V_n'(x) \geq 0$ in $(0,\infty)$. Notice also that 
$v_n$ is strictly decreasing in $(0,\infty)$ by Lemma \ref{defi:2.7}. 
Hence (f4) yields that 
$h_n(x)$ is strictly increasing in $[0,\infty)$. Since 
$v_n''(0) \leq 0$ and $\cM^\pm (v_n'') = v_n h_n$ in $[0,\infty)$, 
we see $h_n(0) \leq 0$. Noting $h_n(x) \to V_\infty > 0$ as $x \to \infty$, 
there exists a unique $z_n^\pm \geq 0$ such that $h_n(z_n^\pm) = 0$. 
Therefore, one has 
	\[
		v_n''(x) < 0 < v_n''(y) \quad 
		{\rm for}\ 0 \leq x < z_n^\pm < y.
	\]
Moreover, taking a subsequence if necessary, we may assume 
$v_n(z_n^\pm) \to \tilde{s} \geq 0$ since $v_n(z_n^\pm)$ is bounded. 
Noting $V_n(z_n^\pm) \to V_\infty$ as $n \to \infty$ and 
letting $n \to \infty$ in $h_n(z_n^\pm) = 0$, it follows from (f2) that 
	\[
		\tilde{s} > 0 \quad {\rm and} \quad 
		V_\infty = \frac{f(\tilde{s})}{\tilde{s}}.
	\]
Thus by \eqref{eq:2.15}, we obtain $\tilde{s} = s_\infty$ and 
$v_n(z_n^\pm) \to s_\infty$. Recalling $v_n''(x) \geq 0$ for $x \geq z_n^\pm$, 
$V_n' \geq 0$ in $[0,\infty)$ and 
$\varphi_n \equiv 0$ in $[0,\infty)$, we have 
	\[
		\begin{aligned}
			E_{n,+}'(x) &= v_n'(x) 
			\left( \Lambda v_n'' + f(v_n) - V_n v_n \right) 
			- \frac{V_n'}{2} v_n^2 
			= - \frac{V_n'}{2} v_n^2 \leq 0 \quad 
			{\rm in}\ [z_n^+,\infty), 
			\\
			E_{n,-}'(x) &= - \frac{V_n'}{2} v_n^2 \leq 0 
			\quad {\rm in} \ [z_n^-,\infty).
		\end{aligned}
	\]
Thanks to $E_{n,\pm}(x) \to 0$ as $x \to \infty$, one sees 
$E_{n,\pm}(z_n^\pm) \geq 0$. 
Since it follows from \eqref{eq:2.15} that 
	\[
		V_n(z_n^\pm) \to V_\infty, \quad 
		G_\infty (s_\infty) 
		= \int_0^{s_\infty} g_\infty(s) \rd s 
		= \min_{[0,\infty)} G(s) < 0,
	\]
we obtain
	\[
		(v_n'(z_n^\pm))^2 \geq 
		\frac{2}{\Lambda} 
		\left\{ \frac{V_n(z_n^\pm)}{2} v_n^2(z_n^\pm) - F(v_n(z_n^\pm))   \right\} 
		\to -\frac{2}{\Lambda} G_\infty(s_\infty) > 0
	\]
By the fact that $(v_n'')$ is bounded in $[-1,\infty)$, we may find a 
$\delta_1, \delta_2 >0$ so that 
	\[
		|v_n'(x)| \geq \delta_1 > 0 \quad 
		{\rm in} \ [z_{n}^\pm - \delta_2, z_n^\pm + \delta_2].
	\]
Due to this and the fact $v_n'(0) = 0$, shrinking $\delta_2>0$ if necessary, 
we may assume $z_n^\pm \geq \delta_2 > 0$ for any $n$. 
Furthermore, by $v_n(z_n^\pm) \to s_\infty$ and $v_n' \leq 0$ in $[0,\infty)$, 
we obtain 
	\[
		v_n(0) \geq v_n(z_n^\pm - \delta_2) 
		= v_{n} (z_n^\pm) - \int_{z_n^\pm - \delta_2}^{z_n^\pm} v_n'(x) \rd x 
		\geq v_n (z_n^\pm) + \delta_1 \delta_2 \to s_\infty + \delta_1 \delta_2.
	\]
However, this contradicts $v_0 \equiv s_\infty$ in $[0,\infty)$. 
Thus $v_{0,\infty} = 0$ and Step 1 holds.

\bigskip

		To proceed further, we need some preparations. 
First, combining the monotonicity of $v_n$ with \eqref{eq:2.17}, 
we can prove that 
	\begin{equation}\label{eq:2.18}
		v_n \to \omega_{\pm} \quad {\rm strongly\ in}\ L^\infty(\R). 
	\end{equation}
Moreover, by the differential equation, 
we also derive the uniform exponential decay at $x = \infty$:
	\begin{equation}\label{eq:2.19}
		v_n(x) + |v_n'(x) | \leq c_3 \exp ( - c_4 x   ) \quad 
		{\rm for\ all}\ x \geq 0 \quad {\rm and} \quad 
		n \geq 1
	\end{equation}
where $c_3,c_4>0$ do not depend on $n$. 
Thus, using the same notation $z_n^\pm$ to the above, 
namely, unique points satisfying 
$v_n''(z_n^\pm) = 0$ and $z_n^\pm \geq 0$, 
we claim that 
$z_n^\pm \to z^\pm$ where $z^\pm$ are unique points 
satisfying $z^\pm > 0$ and $\omega_\pm''(z^\pm) = 0$. 
In fact, the unique existence of $z^\pm$ is ensured by Proposition \ref{defi:2.1}. 
Furthermore, by \eqref{eq:2.18}, (V1), (f2), 
$\varphi_n \equiv 0$ in $[0,\infty)$ and 
$\omega_\pm(x) \to 0$ as $|x| \to \infty$, 
there exist $n_0$ and $R_0>0$ such that 
if $n \geq n_0$ and $x \geq R_0$, then 
	\[
		\cM^\pm (v_n'') = V(x) v_n(x) - f(v_n(x)) > 0, 
	\]
which yields $z_n^\pm \leq R_0$. Moreover, by $\omega_\pm''(0)< 0$, 
we also observe that $z_n^\pm$ never approaches to $0$. 
Thus, by the uniqueness of $z^\pm$, we have 
$z_n^\pm \to z^\pm$ and we may assume $z_n^\pm > 0$.

		Next, since $v_n$ is strictly increasing in $(-\infty,0]$ and 
strictly decreasing in $[0,\infty)$, let $y_n^\pm(s)$ and 
$z_n^\pm(s)$ be inverse functions of $v_n$ satisfying 
$y_n^\pm(s) \leq 0 \leq z_n^\pm(s)$ for 
$ 0< s \leq v_n(0)$. In particular, we have 
	\[
		y_n^\pm, z_n^\pm \in C( (0,v_n(0)] , \R ), 
		\quad v_{n}(y_n^\pm(s)) = s = v_n(z_n^\pm(s))  \quad 
		{\rm for}\ 0< s \leq v_n(0).
	\]
Moreover, $y_n^\pm, z_n^\pm$ are smooth except for at most two points 
$s=v_n(0)$ and $s=v_n(y)$ where $v_n'(y) = 0$ and $y \neq 0$. 
Set $y_n^\pm := y_n^\pm ( v_n(z_n^\pm)  )$, namely, 
$ y_n^\pm < 0$ and $v_n(y_n^\pm) = v_n(z_n^\pm)$ hold. 
Moreover, by $\omega_\pm(-x) = \omega_\pm(x)$, 
$y_n^\pm \to -z^\pm$ as $n \to \infty$. 
		Next, set 
	\[
		\begin{aligned}
			& 
			E_{n,\infty,+}(x) 
			:= \frac{\Lambda}{2} (v_n'(x))^2 + F(v_n(x)) - \frac{V_\infty}{2} v_n^2(x) 
			& &\text{for $\cM^+$} ,
			\\
			& 
			E_{n,\infty,-}(x) 
			:= \frac{\lambda}{2} (v_n'(x))^2 + F(v_n(x)) - \frac{V_\infty}{2} v_n^2(x)
			& &\text{for $\cM^-$} .
		\end{aligned}
	\]
Remark that $E_{n,\infty,\pm}(x) \to 0$ as $|x| \to \infty$. 
Then we shall prove

\medskip

\noindent
{\bf Step 2:} {\sl We have 
	\[
		\begin{aligned}
			0 & \leq - E_{n,\infty,+} (z_n^+) \leq 
			c \exp \left( - 2 x_n \sqrt{ \frac{V_\infty}{\Lambda} + \xi_0 } \right), 
			\\
			0 & \leq - E_{n,\infty,-} (z_n^-) \leq 
			c \exp \left( - 2 x_n \sqrt{ \frac{V_\infty}{\lambda} + \xi_0 } \right)
		\end{aligned}
	\]
where $c>0$ is independent of $n$ and $\xi_0>0$ the constant in \emph{(V3)}.}


First we notice that 
	\begin{equation}\label{eq:2.20}
		\begin{aligned}
			&E_{n,\infty,+}'(x) = v_n'(x) ( \Lambda v_n'' + f(v_n) - V_\infty v_n ) 
			\\
			&= \left\{ \begin{aligned}
				& v_n' \left[ (V_n-V_\infty) v_n - s_n \varphi_n \right]
				& & {\rm if}\ v_n''(x) \geq 0,\\
				& v_n' \left[ \frac{\Lambda}{\lambda} 
				( V_n v_n - f(v_n) - s_n \varphi_n ) + f(v_n) - V_\infty v_n  \right]
				& & {\rm if} \ v_n''(x) < 0
			\end{aligned}\right.
		\end{aligned}
	\end{equation}
and
	\begin{equation}\label{eq:2.21}
		\begin{aligned}
			&E_{n,\infty,-}'(x) = v_n'(x) ( \lambda v_n'' + f(v_n) - V_\infty v_n ) 
			\\
			&= \left\{ \begin{aligned}
			& v_n' \left[ (V_n-V_\infty) v_n - s_n \varphi_n \right]
			& & {\rm if}\ v_n''(x) \geq 0,\\
			& v_n' \left[ \frac{\lambda}{\Lambda} 
			( V_n v_n - f(v_n) - s_n \varphi_n ) + f(v_n) - V_\infty v_n  \right]
			& & {\rm if} \ v_n''(x) < 0.
			\end{aligned}\right.
		\end{aligned}
	\end{equation}
Since $v_n'' > 0$ in $(z_n^\pm,\infty)$, $v_n' \leq 0$ in $[0,\infty)$ and 
$\varphi_n \equiv 0$ in $[0,\infty)$, we get 
	\[
		(E_{n,\infty,\pm})'(x) = v_n'(V_n-V_\infty) v_n \geq 0 \quad 
		{\rm in} \ (z_n^\pm,\infty).
	\]
Hence, (V3), $z_n^\pm \to z^\pm > 0$ and \eqref{eq:2.19} give 
	\[
		\begin{aligned}
			0 & \leq - E_{n,\infty,\pm} (z_n^\pm) 
			= \int_{z_n^\pm}^\infty (E_{n,\infty,\pm})'(x) \rd x 
			= \int_{z_n^\pm}^\infty ( - v_n') v_n (V_\infty-V_n) \rd x
			\\
			& \leq \left\{ 
				\begin{aligned}
					& c \exp \left( - 2 \sqrt{\frac{V_\infty}{\Lambda} + \xi_0} (x_n + z_n^+)
					  \right)  & &\text{(for $\cM^+$)}
					  \\
					& c \exp \left( - 2 \sqrt{\frac{V_\infty}{\lambda} + \xi_0} (x_n + z_n^-)
					\right)  & &\text{(for $\cM^-$)}
				\end{aligned}
			\right.
			\\
			& \leq \left\{ 
			\begin{aligned}
			& c \exp \left( - 2 x_n \sqrt{\frac{V_\infty}{\Lambda} + \xi_0}
			\right)  & &\text{(for $\cM^+$)}
			\\
			& c \exp \left( - 2 x_n \sqrt{\frac{V_\infty}{\lambda} + \xi_0} 
			\right)  & &\text{(for $\cM^-$)}.
			\end{aligned}
			\right.
		\end{aligned}
	\]
Hence, Step 2 holds.

\medskip

\noindent
{\bf Step 3:} {\sl One has 
	\[
		E_{n,\infty,\pm} (z_n^\pm) \leq E_{n,\infty,\pm}(y_n^\pm).
	\]}


		Recalling $y_n^\pm \to - z^\pm$ and $x_n \to \infty$, 
we notice that for each $s \in [v_n(z_n^\pm), v_n(0)]$, 
(V2) and $y_n^\pm(s) \leq z_n^\pm (s)$ imply 
$V_n(y_n^\pm (s)) \leq V_n(z_n^\pm(s))$. 
Moreover, we may assume $\varphi_n \equiv 0$ 
in $[y_n^\pm,\infty)$. Hence, noting 
	\[
		\begin{aligned}
			& \cM^\pm(v_n'') = V_n v_n - f(v_n) \quad 
			{\rm in}\ [y_n^\pm,z_n^\pm], \quad 
			v_n''(x) < 0 \quad {\rm in} \ [0,z_n^\pm),
			\\
			& v_n(y_n^\pm(s)) = s = v_n(z_n^\pm(s)) \quad 
			{\rm for}\ s \in [v_n(z_n^\pm), v_n(0)],
		\end{aligned}
	\]
we obtain 
	\[
		v_n''(x) < 0 \quad {\rm for\ all}\ x \in (y_n^\pm,z_n^\pm).
	\]
From this it follows that $v_n'(z) < 0 < v_n'(y)$ for $y_n^\pm \leq y < 0 < z \leq z_n^\pm$ 
and $y_n^\pm, z_n^\pm \in C^1([ v_n(z_n^\pm), v_n(0) ) )$. 
Thus we see from \eqref{eq:2.20}, $v_n(z^+_n) = v_n(y_n^+)$, 
the monotonicity of $V_n$ and 
the change of variables $s = v_n(x)$ that 
	\[
		\begin{aligned}
			E_{n,\infty,+}(0) - E_{n,\infty,+}(y_{n}^+) 
			& = \int_{y_n^+}^0 (E_{n,\infty,+})'(x) \rd x 
			\\
			&= \int_{y_n^+}^0 
			\left[ \frac{\Lambda}{\lambda} \left\{ V_n v_n - f(v_n)  \right\} 
			+ f(v_n) - V_\infty v_n \right] v_n' \rd x 
			\\
			&= \int_{v_n(y_n^+)}^{v_n(0)} 
				\left[ \frac{\Lambda}{\lambda} \left\{ V_n (y_n^+(s))s - f(s)  \right\} 
				+ f(s) - V_\infty s \right] \rd s 
			\\
			&\leq \int_{v_n(z_n^+)}^{v_n(0)} 
			\left[ \frac{\Lambda}{\lambda} \left\{ V_n (z_n^+(s))s - f(s)  \right\} 
			+ f(s) - V_\infty s \right] \rd s 
			\\
			&= - \int_0^{z_n^+}
			\left[ \frac{\Lambda}{\lambda} \left\{ V_n (x)v_n - f(v_n)  \right\} 
			+ f(v_n) - V_\infty v_n \right]  v_n'\rd x
			\\
			& = - \int_0^{z_n^+} (E_{n,\infty,+})'(x) \rd x 
			= E_{n,\infty,+}(0) - E_{n,\infty,+} (z_n^+)
		\end{aligned}
	\]
Hence, $E_{n,\infty,+}(z_n^+) \leq E_{n,\infty,+}(y_n^+)$. 
In a similar way, we can prove 
$E_{n,\infty,-}(z_n^-) \leq E_{n,\infty,-}(y_n^-)$ and  
Step 3 holds.

\bigskip

In what follows, we derive the estimates for $E_{n,\infty,\pm}(y_n^\pm)$. 
First we prove

\medskip

\noindent
{\bf Step 4:} {\sl $E_{n,\infty,\pm}'(x) \leq 0$ in $(-\infty,y_n^\pm)$ 
	for sufficiently large $n$.}

\medskip

For $E_{n,\infty,+}$, by $v_n' \geq 0$ in $(-\infty,y_n^+)$ and 
\eqref{eq:2.20}, if $x < y_n^+$ and $v_n''(x) \geq 0$, then we have 
	\[
		(E_{n,\infty,+})'(x) = v_n' \left\{ (V_n-V_\infty) v_n - s_n \varphi_n \right\} 
			\leq 0 .
	\]
On the other hand, if $x < y_n^+$ and $v_n''(x) < 0$, then 
$\lambda \leq \Lambda$ gives 
	\begin{eqnarray*}
		(E_{n,\infty,+})'(x) &=& v_n' ( \Lambda v_n'' + f(v_n) - V_\infty v_n )\\ 
		&=& v_n' \left\{ (\Lambda - \lambda) v_n'' + (V_n-V_\infty) v_n 
		- s_n \varphi_n \right\} \leq 0.
	\end{eqnarray*}
Hence, $(E_{n,\infty,+})'(x) \leq 0$ in $(-\infty,y_n^+)$.

		For $\cM^-$, if $x < y_n^-$ and $v_n''(x) \geq 0$, 
then we have 
	\[
		(E_{n,\infty,-})'(x) = v_n' \left\{ (V_n-V_\infty) v_n - s_n \varphi_n \right\} 
		\leq 0.
	\]
On the other hand, we consider the case 
$v_n''(x) \leq 0$ and $x < y_n^-$. We first remark that 
for sufficiently large $n$, we have $(E_{n,\infty,-})'(x) \leq 0$ 
provided $x \in (-\infty,3\kappa_0 - x_n]$ and $v_n''(x) \leq 0$. In fact, 
it follows from \eqref{eq:2.18}, (f2), $\omega_-(x) \to 0$ as $|x| \to \infty$ and 
(V1) that one can find $n_0$ and $R_0 \geq 0$ so that 
	\[
		f(v_n) - V_\infty v_n \leq 0 \quad 
		{\rm for\ each} \ n \geq n_0 \quad {\rm and} \quad 
		x \leq - R_0. 
	\]
Since we may assume $3\kappa_0 -x_n \leq -R_0$ for $n \geq n_0$ 
due to $x_n \to \infty$, the condition $v_n''(x) \leq 0$ and $x \leq 3\kappa_0 -x_n$ 
give 
	\[
		(E_{n,\infty,-})'(x) 
		= v_n'(x) \left\{ \lambda v_n''(x) + f(v_n) - V_\infty v_n  \right\} \leq 0.
	\]
Therefore, we only consider in 
$[3\kappa_0-x_n,y_n^-]$ and remark that 
$\varphi_n \equiv 0$ on the interval.

		Next, we shall show that 
$f(v_n(x)) - V_\infty v_n(x) \leq 0$ 
when $v_n''(x) \leq 0$ and $x \in [3\kappa_0-x_n,y_n^-]$. 
Noting $v_n(y_n^-) = v_n(z_n^-)$, $v_n(x) \leq v_n(y_n^-)$ 
for $x \in [3\kappa_0-x_n,y_n^-]$ and 
	\[
		v_n''(z_n^-) = 0 = V_n(z_n^-) v_n(z_n^-) - f(v_n(z_n^-)),
	\]
we infer from (V2), (f4) and $v_n'(x) \geq 0$ in $[3\kappa_0-x_n,y_n^-]$ that 
	\[
		0 = V_n(z_n^-) - \frac{f(v_n(y_n^-))}{v_n(y_n^-)} 
		\leq V_\infty - \frac{f(v_n(x))}{v_n(x)} 
		\quad {\rm for\ all} \ x \in [3\kappa_0 - x_n,y_n^-].
	\]
Thus $f(v_n(x)) - V_\infty v_n(x) \leq 0$ in $[3\kappa_0-x_n,y_n^-]$. 
Therefore, when $x \in [3 \kappa_0-x_n,y_n^-]$ and $v_n''(x) \leq 0$, 
it follows from \eqref{eq:2.21} that 
	\[
		(E_{n,\infty,-})'(x) 
		\leq \lambda v_n'(x) v_n''(x) \leq 0.
	\]
Hence, Step 4 holds.

\medskip

\noindent
{\bf Step 5:} {\it One has }
	\[
		E_{n,\infty,\pm} (y_n^\pm) \leq 
		\int_{-3\kappa_0 - x_n}^{-2 \kappa_0 -x_n} 
		v_n' v_n (V_n - V_\infty) \rd x. 
	\]

\medskip

By Step 4, we have $(E_{n,\infty,\pm})'(x) \leq 0$ in 
$(-\infty,y_n^\pm)$. Since $E_{n,\infty,\pm}(x) \to 0$ as $x \to - \infty$, 
we obtain 
	\begin{equation}\label{eq:2.22}
		E_{n,\infty,\pm} (y^\pm_n) 
		= \int_{-\infty}^{y_n^\pm} (E_{n,\infty,\pm})'(x) \rd x 
		\leq \int_{-3\kappa_0-x_n}^{-2\kappa_0-x_n} 
		(E_{n,\infty,\pm})'(x) \rd x 
	\end{equation}
Recalling \eqref{eq:2.18}, (V1), (f2), $x_n \to \infty$ and 
$\varphi_n \equiv 0$ in $[-3\kappa_0 - x_n, -2\kappa_0 - x_n]$, 
we may assume that 
	\[
		\cM^\pm (v_n'') = V_n v_n - f(v_n) \geq 0 
		\quad {\rm in}\ [-3\kappa_0 - x_n, -2\kappa_0 - x_n].
	\]
Hence, $v_n''(x) \geq 0$ in $[-3\kappa_0 - x_n,-2\kappa_0 - x_n]$ and 
	\[
		(E_{n,\infty,\pm})'(x) = v_n'(V_n-V_\infty) v_n \quad 
		{\rm in}\ [-3\kappa_0 - x_n,-2\kappa_0 - x_n].
	\]
Thus it is easily seen from \eqref{eq:2.22} that Step 5 holds.

\medskip

\noindent
{\bf Step 6:} 
{\sl There exists a $c>0$, which is independent of $n$, such that 
	\begin{equation}\label{eq:2.23}
		\begin{aligned}
			& \min \{ v_n(x), v_n'(x)  \} 
			\geq c \exp \left( -  |x| 
			\sqrt{\frac{V_\infty}{\Lambda} + \frac{\xi_0}{2(\Lambda + 1)} } \right) 
			& &\text{\emph{for $\cM^+$}},
			\\
			& \min \{ v_n(x), v_n'(x)  \} 
			\geq c \exp \left( -  |x| 
			\sqrt{\frac{V_\infty}{\lambda} + \frac{\xi_0}{2(\lambda + 1)} } \right)
			& &\text{\emph{for $\cM^-$}}
		\end{aligned}
	\end{equation}
for all $x \leq -2 \kappa_0 - x_n$ and sufficiently large $n$. 
}

\medskip

Set 
	\[
		\psi_+ (x) := c \exp \left(  x  
		\sqrt{\frac{V_\infty}{\Lambda} + \frac{\xi_0}{2(\Lambda + 1)} } \right), 
		\quad 
		\psi_- (x) := c \exp \left(   x 
		\sqrt{\frac{V_\infty}{\lambda} + \frac{\xi_0}{2(\lambda + 1)} } \right)
	\]
where $c>0$ is chosen below. 
By \eqref{eq:2.18}, (f2) and $\lambda \leq \Lambda$, we find an $R_0>0$ such that 
	\begin{equation}\label{eq:2.24}
		\left| \frac{f(v_n(x))}{v_n(x)} \right| \leq 
		\frac{\lambda}{4(\lambda + 1)} \xi_0 
		\leq \frac{\Lambda}{4(\Lambda + 1)} \xi_0 
	\end{equation}
for all $x \leq - R_0$ and sufficiently large $n$. Fix a $c>0$ so that 
$\psi_\pm (-R_0) \leq v_n(-R_0)$ for all sufficiently large $n$. 

We first notice that 
	\begin{eqnarray*}
			&& -\cM^+(\psi_+'') + \left\{ V_n + \frac{\Lambda}{4(\Lambda + 1)} \xi_0 
			\right\} \psi_+\\
			& = &\left\{  - V_\infty - \frac{\Lambda}{2( \Lambda + 1)} \xi_0 
			+ V_n
			+ \frac{\Lambda}{4(\Lambda + 1)} \xi_0  \right\} \psi_+ 
			\\
			&=& \left\{  V_n - V_\infty - \frac{\Lambda}{4(\Lambda + 1)} \xi_0  \right\} 
			\psi_+ \leq 0 \quad {\rm in} \  (-\infty,-R_0).
	\end{eqnarray*}
Similarly, 
	\[
		-\cM^-(\psi_-'') + \left\{ V_n + \frac{\lambda}{4(\lambda + 1)} \xi_0 \right\} 
		\psi_-\leq 0 \quad {\rm in}\ (-\infty,-R_0). 
	\]

		On the other hand, for $x \in (-\infty,-R_0)$, 
it follows from \eqref{eq:2.24} that 
	\[
		\begin{aligned}
			0 &\leq s_n \varphi_n = 
			- \cM^+(v_n'') + V_n v_n - f(v_n) 
			\leq -\cM^+(v_n'') 
			+ \left\{ V_n + \frac{\Lambda}{4(\Lambda + 1) } \xi_0 \right\} v_n,\\
			0 & \leq -\cM^-(v_n'') 
			+ \left\{ V_n + \frac{\lambda}{4(\lambda + 1) } \xi_0 \right\} v_n
		\end{aligned}
	\]
Hence, putting 
	\[
		w_{n,+}(x) := v_n - \psi_+, \quad 
		w_{n,-}(x) := v_n - \psi_-,
	\]
we have 
	\begin{eqnarray*}
		0 &\leq& - \cM^-(w_{n,+}'') + 
		\left\{ V_n + \frac{\Lambda}{4(\Lambda + 1) } \xi_0 \right\} w_{n,+}, \quad \\
		0 &\leq& -\cM^-(w_{n,-}'') 
		+ 
		\left\{ V_n + \frac{\lambda}{4(\lambda + 1) } \xi_0 \right\} w_{n,-}
	\end{eqnarray*}
in $(-\infty,-R_0)$. Recalling $w_{n,\pm}(-R_0) \geq 0$ and 
$w_{n,\pm} \to 0$ as $x \to - \infty$, 
$w_{n,\pm}$ do not have negative minima and 
we get $w_{n,\pm} \geq 0$ in $(-\infty,-R_0]$. 
Thus \eqref{eq:2.23} holds for $v_n$.

		For $v_n'$, since $\varphi_n \equiv 0$ 
in $(-\infty,-2\kappa_0-x_n)$, 
there exists a $c>0$ such that 
	\[
		\cM^\pm(v_n'') = V_n v_n - f(v_n)  \geq c v_n \quad 
		{\rm in}\ (-\infty,-2\kappa_0-x_n)
	\]
for all sufficiently large $n$. Hence, \eqref{eq:2.23} holds for $v_n''$. 
Noting 
	\[
		v_n'(x) = \int_{-\infty}^x v_n''(y) \rd y,
	\]
\eqref{eq:2.23} holds.

\medskip

\noindent
{\bf Step 7:} {\sl Conclusion (Completion of the proof for \eqref{eq:2.16})}.

\medskip

We first notice that 
by the choice of $\kappa_0>0$, one has 
	\begin{equation}\label{eq:2.25}
		\min_{[-3\kappa_0-x_n,-2\kappa_0-x_n]} (V_\infty - V_n) 
		= \min_{[-3\kappa_0,-2\kappa_0]} 
		(V_\infty - V(x)) > 0.
	\end{equation}
By Step 6, we observe that for $x \in [-3\kappa_0-x_n,-2\kappa_0-x_n]$
	\begin{equation}\label{eq:2.26}
		v_n'(x)v_n(x) \geq 
		\left\{\begin{aligned}
			& c \exp \left( -2(3\kappa_0 + x_n) 
			\sqrt{\frac{V_\infty}{\Lambda} + \frac{\xi_0}{2(\Lambda + 1)}}  \right) 
			& & \text{(for $\cM^+$)},
			\\
			& c \exp \left( -2(3\kappa_0 + x_n) 
			\sqrt{\frac{V_\infty}{\lambda} + \frac{\xi_0}{2(\lambda + 1)}}  \right) 
			& & \text{(for $\cM^-$)}. 
		\end{aligned}\right.
	\end{equation}
Therefore, using \eqref{eq:2.25}, \eqref{eq:2.26} and Step 5, we obtain 
	\[
		\begin{aligned}
			& - E_{n,\infty,+} (y_n^+) 
			\geq  c \exp \left( -2 x_n 
			\sqrt{\frac{V_\infty}{\Lambda} + \frac{\xi_0}{2(\Lambda + 1)}}  \right)
			& &\text{(for $\cM^+$)},\\
			& - E_{n,\infty,-} (y_n^-) 
			\geq c \exp \left( -2 x_n 
			\sqrt{\frac{V_\infty}{\lambda} + \frac{\xi_0}{2(\lambda + 1)}}  \right)
			& &\text{(for $\cM^-$)}
		\end{aligned}
	\]
for some $c > 0$. However, 
by Steps 2 and 3, we have a contradiction. 
Hence, we may find an $M_2>0$ so that 
$x_n \leq M_2$.

For the lower bound of $(x_n)$, by introducing 
$\tilde{u}_n(x) := u_n(-x)$, we can reduce the case into the 
case $x_n \to \infty$. Thus \eqref{eq:2.16} holds.


		We finally derive a contradiction 
in order to complete the proof of Proposition \ref{defi:2.9}. 
By \eqref{eq:2.16}, we may assume $x_n \to x_0$. 
Next, from Lemmas \ref{defi:2.6} and \ref{defi:2.8}, we observe that 
if $s_n \geq \tilde{t}(f,V_\infty) $, then 
	\[
		\frac{\kappa_0^2}{4 \Lambda} s_n \leq \| u_n \|_{L^\infty(\R)} 
		\leq M_0.
	\]
Therefore, $(s_n)$ is also bounded and assume that $s_n \to s_0$. 
Thus from the equation, we also get  $u_n \to u_0$  in $C^2_{\rm loc} (\R)$,
	\begin{equation}\label{eq:2.27}
		\begin{aligned}
			& 
			- \cM^\pm (u_0'') + V u_0 = f(u_0) + s_0  \varphi_0 
			\quad {\rm in}\ \R, \quad u_0(x_0) = \max_{\R} u_0, \quad 			\\
			& 
			u_0'(y) \leq 0 \leq u_0'(x) \quad 
			{\rm for}\ x \leq x_0 \leq y, \quad 
			u_0 \geq 0 \ {\rm in} \ \R. 
		\end{aligned}
	\end{equation}
If $u_0(x_0) = 0$, namely $u_0 \equiv 0$, then 
by the monotonicity of $u_n$ ($u_n'(y) \leq 0 \leq u_n'(x)$ 
for $x \leq x_n \leq y$), we choose an $R_0>3\kappa_0$ so that 
	\[
		Vu_n - f(u_n) \geq  \frac{V_0}{2} u_n
	\]
for all $|x| \geq R_0$ and sufficiently large $n$. 
Therefore, we have 
	\[
		\cM^\pm(u_n'') = V u_n - f(u_n) \geq \frac{V_0}{2} u_n  
		\quad {\rm for\ every}\ |x| \geq R_0.
	\]
Hence, we may derive the uniform exponential decay: 
	\[
		u_n(x) \leq c \exp \left( - |x| \sqrt{\frac{V_0}{2 \Lambda}} \right)
	\]
for all $x \in \R$ and $n$. By the definition of $X_{\eta_1}$ and \eqref{eq:2.3}, 
this asserts that 
$(u_n)$ is bounded in $X_{\eta_1}$, however, 
this contradicts $\|u_n\|_{X_{\eta_1}} \to \infty$.

		Next we consider the case $u_0(x_0)>0$ and shall show that 
$\lim_{|x| \to \infty} u_0(x) = 0$. If this is true, then as in the above, 
we can derive a uniform exponential decay and get a contradiction. 
Set $u_\infty := \lim_{x \to \infty} u_0(x)$. 
Since $u_0$ is a bounded solution of \eqref{eq:2.27}, we have 
	\[
		\lim_{x \to \infty} \cM^\pm(u_0'')(x) 
		= \lim_{x\to \infty} \left( Vu_0 - f(u_0) - s_0 \varphi_0 \right)
		= V_\infty u_\infty - f(u_\infty).
	\]
Thus by \eqref{eq:2.15}, either $u_\infty = 0$ or else 
$u_\infty = s_\infty$. Let us assume $u_\infty = s_\infty$. 
From \eqref{eq:2.27}, we get $u_0 \geq s_\infty$ in $[x_0,\infty)$. 
Since $V_\infty s - f(s) < 0$ for $s > s_\infty$, 
one sees that 
	\[
		\cM^\pm (u_0'') = V u_0 - f(u_0) - s_0 \varphi_0 
		\leq V_\infty u_0 - f(u_0) - s_0 \varphi_0 \leq 0 
		\quad {\rm in}\ (x_0,\infty). 
	\]
Since $u_0'(x_0) = 0$ and $u_0(x) \to s_\infty$ as $x \to \infty$, we conclude that 
$V \equiv V_\infty$, $u_0 \equiv s_\infty$ and 
$s_0 \varphi_0 \equiv 0$ in $[x_0,\infty)$. 
Hence, by \eqref{eq:2.9} and $V \equiv V_\infty$ in $[x_0,\infty)$, we see 
$3 \kappa_0 \leq x_0$. Thus we may assume $2 \kappa_0 \leq x_n$. Now set 
	\[
		\begin{aligned}
			E_{n,+}(x) &:= \frac{\Lambda}{2} (u_n'(x))^2 
			+ F(u_n(x)) - \frac{V(x)}{2}u_n^2(x) 
			& &{\rm for}\ \cM^+, \\
			E_{n,-}(x) &:= \frac{\lambda}{2} (u_n'(x))^2 
			+ F(u_n(x)) - \frac{V(x)}{2}u_n^2(x) 
			& &{\rm for} \ \cM^-.
		\end{aligned}
	\]
Since $x_n \geq 2 \kappa_0$, $V'(x) \geq 0$ in $[x_n,\infty)$ and 
$u_n(x) \to 0$ as $|x| \to \infty$, 
arguing for the case $v_n$ in the above, 
we may find 
unique $z_n^\pm \geq x_n $ such that 
	\[
		u_n''(x) < 0 = u_n''(z_n^\pm) < u_n''(y) \quad 
		{\rm for}\ x_n \leq x < z_n^\pm < y.
	\]
Therefore, 
	\[
		(E_{n,\pm})'(x) = -\frac{V'}{2} u_n^2 \leq 0 \quad 
		{\rm in}\ [z_n^\pm,\infty), \quad 
		\lim_{x \to \infty} E_{n,\pm}(x) = 0, \quad 
		E_{n,\pm} (z_n^\pm) \geq 0.
	\]
By $u_n''(z_n^\pm) = 0$, one has 
	\[
		V_0 \leq V(z_n^\pm) = \frac{f(u_n(z_n^\pm))}{u_n(z_n^\pm)}. 
	\]
From (f2), we may find a $\delta_0>0$ so that 
$\delta_0 \leq u_n(z_n^\pm)$. 
Recalling $x_0 \leq \liminf_{n\to \infty} z_n^\pm$,  
$V \equiv V_\infty$ in $[x_0,\infty)$ and \eqref{eq:2.15}, we obtain 
	\[
		\begin{aligned}
		& u_n(z_n^\pm) \to s_\infty, \\ 
		& F(u_{n}(z_n^\pm)) - \frac{V(z_n^\pm)}{2} u_n(z_n^\pm)^2
		\to F(s_\infty) - \frac{V_\infty}{2} s_\infty^2 
		= G_\infty(s_\infty) < 0. 
		\end{aligned}
	\]
Combining with $E_{n,\pm}(z_n^\pm) \geq 0$, we may find 
a $\delta_1 > 0$ so that 
	\[
		(u_n'(z_n^\pm))^2 \geq \delta_1 .
	\]
Noting that $u_n'(x_n) = 0$ and $(u_n'')$ is bounded in $L^\infty(\R)$, 
we have $0<\delta_2 \leq z_n^\pm - x_n$ 
for some $\delta_2>0$, and 
	\[
		(u_n'(x))^2 \geq \delta_3^2 > 0 \quad 
		{\rm in} \ [z_n^\pm-\delta_4,z_n^\pm]
	\]
for some $\delta_3,\delta_4 > 0$ with $\delta_4 \leq \delta_2$. Thus 
	\[
		u_n(x_n) \geq u_n(z_n^\pm - \delta_4) 
		\geq u_n(z_n^\pm ) + \delta_3 \delta_4 
		\to s_\infty + \delta_3 \delta_4.
	\]
This contradicts $u_n(x_n) \to u_0(x_0) = s_\infty$. 
Hence, $u_\infty = 0$.

		For $\lim_{x \to - \infty} u_0(x) = 0$, 
by introducing $v_n(x) = u_n(-x)$ and $v_0(x) = u_0(-x)$, 
we can reduce into the former case and get 
$\lim_{x \to - \infty} u_0(x) = 0$. 
Now we complete the proof of Proposition \ref{defi:2.9}. 
	\end{proof}

\section{ Non-existence theorem}
\label{section:3}

In this section we prove Theorem \ref{defi:1.3} that asserts 
that the equation  \eqref{eq:1.1} 
does not have a solution when $V$ is monotone.  
The argument below is similar to that of Proposition \ref{defi:2.9}. 

\begin{proof}[Proof of Theorem \ref{defi:1.3}]
	Let us suppose for  contradiction  that 
$u$ is a positive solution of \eqref{eq:1.1} 
 and let 
$x_0$ be a maximum point of $u$.  
Noting that $V$ is non-decreasing and that the argument in Lemma \ref{defi:2.7} 
works under (f1) and \eqref{eq:1.4}, 
if $\bar x$ satisfies $u'(\bar x)=0$ then $u'(x)<0$ 
for all $x \in (\bar{x},\infty)$. 
Thus $x_0$ is the  unique critical point of $u$.

To proceed further, we make some preparations. 
Since $u$ is strictly decreasing in $(x_0,\infty)$ and $V$ non-decreasing in $\R$, 
by (f4), we see that the function
	\[
		h(x) := V(x) - \frac{f(u(x))}{u(x)} : [x_0,\infty) \to \R
	\]
is strictly increasing. Moreover, since 
$u''(x_0) \leq 0$ and $\cM^\pm(u'') = u h(x)$ in $[x_0,\infty)$, 
we have $h(x_0) \leq 0$. Hence, by $h(x) \to \overline{V}>0$ as $x \to \infty$ 
thanks to (V2'), 
there is a unique $z^\pm \geq x_0$ such that 
$h(z^\pm) = 0$. In particular, 
$u''(y) > 0 = u''(z^\pm) > u''(x) $ for all $ x_0 \leq x < z^\pm < y$. 
Recalling that $u$ is strictly increasing in $(-\infty,x_0)$ and decreasing 
in $(x_0,\infty)$, $u$ has two inverse functions $y^\pm(s)$ and $z^\pm(s)$ 
satisfying $y^\pm(s) < x_0< z^\pm(s)$ for $ 0 < s < u(x_0)$. 
Next we define  $y^\pm = y^\pm( u(z^\pm) )$ and 
	\[
		\begin{aligned}
			 &H_{+}(x) = 
			\frac{\Lambda}{2} (u'(x))^2 + F(u(x)) - \frac{V(z^+)}{2} u^2(x) 
			& &{\rm for}\ \cM^+,\\	
			 &H_{-}(x) = 
			\frac{\lambda}{2} (u'(x))^2 + F(u(x)) - \frac{V(z^-)}{2} u^2(x)
			& &{\rm for} \ \cM^-.
		\end{aligned}
	\]
To complete the proof we proceed in various steps.

\medskip

\noindent
{\bf Step 1:} {\sl $ H_\pm(z^\pm) \geq 0 $ and  if 
$V(z^\pm) < \overline{V}$, then $H_{\pm} (z^\pm) > 0$. }

\medskip

We start with 
	\begin{align}
		H_{\pm}'(x) &=  u'(x)  \left( V(x) - V(z^\pm) \right) u(x) \ 
		{\rm if} \ u''(x) \geq 0, \label{align:3.1}
		\\
		H_{+}'(x) &= u'(x) \left( \frac{\Lambda}{\lambda} 
		h(x) u(x)   + f(u(x)) - V(z^+) u(x)   \right) \ 
		{\rm if} \ u''(x) \leq 0, \label{align:3.2}
		\\
		H_{-}'(x) &=  u'(x) \left( \frac{\lambda}{\Lambda} 
						 h(x) u(x) + f(u(x)) - V(z^-) u(x)  \right) \ 
						{\rm if} \ u''(x) \leq 0.\label{align:3.3}
	\end{align}
Noticing that $u' < 0 \leq V'$ and $0\le u''$ in $(z^\pm,\infty)$, from (\ref{align:3.1})
we have
	$
		H_{\pm}'(x)\le    0$ in
		 $(z^\pm,\infty)$. In case 
 $V(z^\pm) < \overline{V}$ we additionally have $H_{\pm}'(x) \not \equiv 0$. 
From $H_\pm(x) \to 0$ as $|x| \to \infty$ we conclude that Step 1 holds.

\medskip

\noindent
{\bf Step 2:} {\sl $H_\pm(z^\pm) \leq H_\pm(y^\pm)$ and if $ V \not\equiv {\rm const.}$ 
in $[y^\pm,z^\pm]$ and $y^\pm < z^\pm$, then
$H_{\pm}(z^\pm) < H_\pm (y^\pm)$}.

\medskip

When $z^\pm = x_0$ then $z^\pm = y^\pm$ and the conclusion clearly holds. 
In case  $x_0< z^\pm$  we use arguments  similar to those of Step 3 of the proof of Proposition \ref{defi:2.9}. 
Since $u''(z^\pm (s) ) < 0$ for every $s \in (u(z^\pm), u(x_0))$, 
$u''(z^\pm) = 0 = h(z^\pm) $ and $V$ is non-decreasing, we observe that 
for each $s \in (u(z^\pm), u(x_0))$, 
	\begin{equation}\label{eq:3.4}
		h(y^\pm(s)) = V(y^\pm(s)) - \frac{f(s)}{s} 
		\leq V(z^\pm(s)) - \frac{f(s)}{s} = h(z^\pm(s)) < h(z^\pm) = 0. 
	\end{equation}
From $\cM^\pm(u'') = u(x)h(x)$ it follows that $u''(x) < 0$ in $(y^\pm, z^\pm)$. 
Hence, by \eqref{align:3.2}, \eqref{eq:3.4} and changing variables $s=u(x)$, we have 
	\[
		\begin{aligned}
			H_+(x_0) - H_+(y^+) 
			&= \int_{y^+}^{x_0} 
			\left[ \frac{\Lambda}{\lambda} 
			h(x) u(x)  + f(u(x)) - V(z^+) u(x)   \right] 
			u'(x) \rd x
			\\
			&= \int_{u(y^+)}^{u(x_0)}\left[ \frac{\Lambda}{\lambda} 
				h( y^+(s)) s + f(s) - V(z^+) s   \right] \rd s 
			\\
			& \leq \int_{u(z^+)}^{u(x_0)}\left[ \frac{\Lambda}{\lambda} 
			h( z^+(s)) s + f(s) - V(z^+) s   \right] \rd s 
			\\
			& = - \int_{x_0}^{z^+} 
			\left[ \frac{\Lambda}{\lambda} 
			h(x) u(x)  + f(u(x)) - V(z^+) u(x)   \right] 
			u'(x) \rd x
			\\
			& = - \int_{x_0}^{z^+} H_+'(x) \rd x 
			= H_+ (x_0) - H_+(z^+).
		\end{aligned}
	\]
Thus $H_+(z^+) \leq H_+(y^+)$ and  if $V \not \equiv {\rm const.}$ 
in $[y^+,z^+]$, then 
we have $V(y^+) < V(z^+)$ and $h(y^\pm(s)) < h(z^\pm(s))$ 
for $s$ close to $u(z^\pm)$. Hence, in this case, $H_+(z^+) < H_+(y^+)$ holds. 
Using \eqref{align:3.3} instead of \eqref{align:3.2}, the case of $H^-$ is treated in 
a similar way.

\medskip

\noindent
{\bf Step 3:} {\sl $H_\pm '(x) \leq 0$ in $(-\infty,y^\pm)$ and
 if $V \not \equiv {\rm const.}$ in $(-\infty,y^\pm)$, 
then $H_\pm' \not \equiv 0$}.

\medskip

		First we consider $H_+$. We observe that  $u' > 0$ and $V'(x) \geq 0$ 
in $(-\infty,y^+)$, so that when $u''(x) \geq 0$, \eqref{align:3.1} implies 
	\begin{equation}\label{eq:3.5}
		H_{+}'(x) = u'(x) ( V(x) - V(z^+) ) u(x) \leq 0. 
	\end{equation}
On the other hand, if  $u''(x) < 0$, then recalling that $\lambda \leq \Lambda $, we have
	\begin{equation}\label{eq:3.6}
		\begin{aligned}
			H_{+}'(x) &= u'( \Lambda u'' + f(u) - V(z^\pm) u  ) 
			\\
			&= u' \left\{ (\Lambda - \lambda) u'' + (V(x) - V(z^+)) u  \right\} 
			\leq 0.
		\end{aligned}
	\end{equation}
Hence, $H_{+}' \leq 0$ in $(-\infty,y^+)$. 
If $V \not\equiv {\rm const.}$ in $(-\infty,y^+)$, 
we may find $x_1 < y^+$ such that $V(x_1) < V(y^+) \leq V(z^+)$. 
Then, from  \eqref{eq:3.5} or \eqref{eq:3.6}, we have $H_+'(x_1) < 0$.

		Next we consider $H_-$. 
We have $u'>0$ for  $x < y^-$, hence if  $u''(x) \geq 0$, then we have 
	\begin{equation}\label{eq:3.7}
		H_-'(x) = u'(x) \left\{ V(x) - V(z^-) \right\} u(x) \leq 0.
	\end{equation}
On the other hand, assume that   $u''(x) < 0$. 
Since $u'>0$ in $(-\infty,y^-)$, 
we get $u(x) < u(y^-) = u(z^-)$. Therefore, by the definition of $z^-$ and 
 (f4), we find
	\[
		0 =  h(z^-)=V(z^-)-\frac{f(u(z^-))}{u(z^-)} < V(z^-)-	 \frac{f(u(x))}{u(x)},
	\]
which yields $f(u(x)) - V(z^-) u(x) < 0$. 
Thus,  from \eqref{align:3.3} and monotonicity of $V$, it follows that 
	\begin{eqnarray}\label{eq:3.8}
			H'_-(x) &=& u'(x) \left[ \frac{\lambda}{\Lambda} 
			h(x) u(x) + f(u(x)) - V(z^-) u(x) \right] \nonumber
			\\
			&= &u'(x) 
			\left[ \frac{\lambda}{\Lambda}  \left\{ V(x) - V(z^- ) \right\} u(x) \right.\nonumber\\
			&&
			\left.+ \left( 1 - \frac{\lambda}{\Lambda} \right)
			\left\{ f(u(x)) - V(z^-) u(x)   \right\} 
			\right] \leq 0.
	\end{eqnarray}
By \eqref{eq:3.7} and \eqref{eq:3.8}, we get $H'_- (x) \leq 0$ in $(-\infty,y^-)$. 
Moreover, it is easily seen that 
when $V \not \equiv {\rm const.}$ in $(-\infty,y^-)$, 
$H'_- \not \equiv 0$ holds.

\medskip

\noindent
{\bf Step 4:} {\sl Conclusion.} 

\medskip

By Steps 1--3, we get 
	\begin{equation}\label{eq:3.9}
		0 \leq H_\pm (z^\pm) \leq H_\pm (y^\pm) 
		= \int_{-\infty}^{y^\pm} H_\pm'(x) \rd x \leq 0.
	\end{equation}
However, $\underline{V} < \overline{V}$, so we have 
$V \not \equiv {\rm const.}$ in either 
$(-\infty,y^\pm)$ or $(y^\pm,z^\pm)$ or $(z^\pm,\infty)$. Consequently,   
at least one inequality in \eqref{eq:3.9} is strict, providing
 a contradiction and completing the proof. 
\end{proof}

\medskip

\noindent {\bf Acknowledgements:} 
The authors would like to thank Lawrence  Evans 
for pointing up  the variational structure of \eqref{eq:1.1}. 
P.F.  was  partially supported BASAL-CMM projects. 
N.I. was  partially supported by JSPS Research Fellowships 24-2259 and 
JSPS KAKENHI Grant Number JP16K17623. 
 The second author would like to thank Universidad de Chile, where this work was started, for their hospitality.

\appendix

\section{Proof of Proposition \ref{defi:2.1}}
\label{section:A}

Here we prove Proposition \ref{defi:2.1}.

	\begin{proof}[Proof of Proposition \ref{defi:2.1}]
	We first prove the existence of solutions. For $\alpha > 0$, we consider 
	\begin{align}
		- u'' &= \Lambda^{-1} g_\infty(u) 
		& &{\rm in}\ \R, & &
		(u'(0), u(0)) = (0, \alpha),
		\label{align:A.1}\\
		- u'' &= \lambda^{-1} g_\infty(u) 
		& &{\rm in}\ \R, & &
		(u'(0), u(0)) = (0, \alpha)
		\label{align:A.2}
	\end{align}
where $g_\infty(s) := f(s) - V_\infty s$, and we 
write $u_{\Lambda,\alpha}$ and $u_{\lambda,\alpha}$ for unique solutions 
of \eqref{align:A.1} and \eqref{align:A.2}. 
By (f1)--(f4), it is well known that there exists an $\alpha_{0} > 0$ so that 
$u_{\Lambda,\alpha}(x)$ hits zero at some point $x_\alpha > 0$ 
($u_{\Lambda,\alpha}(x_\alpha) = 0$) if $\alpha > \alpha_0$, 
$u_{\Lambda,\alpha_{0}}$ is a positive solution of \eqref{align:A.1} and 
$u_{\Lambda,\alpha_{0}}(x) \to 0$ as $|x| \to \infty$, and 
$u_{\Lambda,\alpha}(x)$ a positive periodic solution of \eqref{align:A.1} when $\alpha < \alpha_0$. 
The number $\alpha_0 > 0$ is characterized by 
	\begin{equation}\label{eq:A.3}
		G_\infty(\alpha_0) = 0
	\end{equation}
and \eqref{eq:A.3} has a unique positive solution due to \eqref{eq:2.15} (or (f1)--(f4)). 
For instance, see \cite{BL-83,JT-03}.  
Therefore, \eqref{eq:A.3} yields $\alpha_0 > s_\infty$.  
The same statement holds for $u_{\lambda,\alpha}$.

	For $\mu > 0$, set 
	\[
		E[u,\mu](x) := \frac{1}{2} (u'(x))^2 + \mu^{-1} G_\infty (u(x)). 
	\]
Then it is easily seen that 
	\[
		\frac{\rd}{\rd x} E[ u_{\Lambda,\alpha} , \Lambda ] (x) \equiv 0 
		\equiv \frac{\rd}{\rd x} E[ u_{\lambda, \alpha} , \lambda ] (x) \quad 
		{\rm in} \ \R. 
	\]
In particular, since 
$u_{\Lambda,\alpha_0}(x)$,  $u_{\Lambda,\alpha_0}'(x)$, 
$u_{\lambda,\alpha_0}(x)$, $u_{\lambda,\alpha_0}'(x) \to 0$ 
as $x \to \infty$,
we have $E[u_{\Lambda,\alpha_0},\Lambda] \equiv 0 \equiv 
E[u_{\lambda,\alpha_0}, \lambda]$ in $\R$.

	Since $u_{\Lambda,\alpha_0}(0) = \alpha_0 > s_\infty$ and 
$u_{\Lambda,\alpha_0}(x) \to 0$ as $x \to \infty$, 
we may choose $x_{\Lambda} > 0$ so that 
$u_{\Lambda,\alpha_0}(x_\Lambda) = s_\infty$ and 
$u_{\Lambda,\alpha_0} (x) < s_\infty$ for every $x > x_\Lambda$. 
Recalling $G_\infty(s_\infty) < 0$ and 
$E[u_{\Lambda,\alpha_0},\Lambda](x_\Lambda) = 0$, 
we obtain $u_{\Lambda, \alpha_0}'(x_\Lambda) < 0$ and 
	\[
		\begin{aligned}
			\lambda^{-1} \min_{\R} G_\infty &< 
			\frac{1}{2} (u_{\Lambda,\alpha_0}'(x_\Lambda))^2 
			+ \lambda^{-1} G_\infty( u_{\Lambda,\alpha_0}(x_\Lambda) ) 
			\\
			&= E[u_{\Lambda,\alpha_0},\lambda] (x_\Lambda)
			< E[u_{\Lambda,\alpha_0},\Lambda](x_\Lambda) = 0.
		\end{aligned}
	\]
By \eqref{eq:A.3} and \eqref{eq:2.15}, the equation
	\begin{equation}\label{eq:A.4}
		\lambda^{-1} G_\infty(s) = E[u_{\Lambda,\alpha_0},\lambda](x_\Lambda)
		\in \left(\lambda^{-1} \min_{\R} G_\infty,0\right)
	\end{equation}
has two solutions $0<s_1 < s_\infty < s_2 < \alpha_0$.

	Now we consider $u_{\lambda,s_2}(x)$. 
Since $u_{\lambda,s_2}$ is periodic and 
$E[u_{\lambda,s_2},\lambda](x) = E[ u_{\lambda,s_2},\lambda ](0) 
= \lambda^{-1} G_\infty(s_2) < 0$ in $\R$, we observe that 
	\[
		\max_{\R} u_{\lambda,s_2} = s_2 > s_\infty > s_1 = \min_{\R} u_{\lambda,s_2}. 
	\]
Hence, we may select a $y_1>0$ so that 
$u_{\lambda,s_2}(x) > s_\infty = u_{\lambda,s_2}(y_1)$ 
for each $x \in [0,y_1)$. 
From the choice of $y_1$ and \eqref{eq:A.4}, it follows that 
	\[
		u_{\lambda,s_2}(y_1) = s_\infty = u_{\Lambda,\alpha_0}(x_\Lambda), 
		\quad 
		E[u_{\lambda,s_2},\lambda] (y_1) = \lambda^{-1} G_\infty(s_2) 
		= E[u_{\Lambda,\alpha_0}, \lambda] (x_\Lambda),
	\]
which implies $|u_{\lambda,s_2}'(y_1)| = |u_{\Lambda,\alpha_0}'(x_{\Lambda})|$. 
By $u_{\lambda,s_2}'(y_1), u_{\Lambda,\alpha_0}'(x_{\Lambda}) \leq 0$ 
due to the definition of $y_1$ and $x_{\Lambda}$, 
we obtain $u_{\lambda,s_2}'(y_1) = u_{\Lambda,\alpha_0}'(x_\Lambda)$.  
Thus, set 
	\[
		u(x) := \left\{\begin{aligned}
			& u_{\lambda,s_2}(x) & &{\rm if} \ 0 \leq x \leq y_{1},\\
			& u_{\Lambda,\alpha_0}(x - y_{1} + x_\Lambda) 
			& &{\rm if} \ y_{1} < x. 
		\end{aligned}\right.
	\]
and $u(x) := u(-x)$ for $x < 0$, it is easily seen that $u \in C^1(\R)$ and 
$u$ satisfies $-\cM^+(u'') = g_\infty(u)$ in $\R \setminus \{\pm y_{1}\}$. 
In addition, from the definition of $u$, it follows that 
	\[
		\lim_{h \downarrow 0} \frac{u'(y_1+h)-u'(y_1)}{h} = 0 = 
		\lim_{h \uparrow 0}  \frac{u'(y_1+h)-u'(y_1)}{h}.
	\]
Hence, $u \in C^2(\R)$ and 
$u$ is a solution of \eqref{eq:2.1}. 
Moreover, we observe that 
$u''(x) < 0 < u''(y)$ for all $|x| < y_1$ and 
$|y| > y_1$. 
Further, it is known that $u_{\Lambda, \alpha_0}$ decays 
exponentially, so does $u$.

	For \eqref{eq:2.2}, we start with $u_{\lambda,\alpha_0}$ 
instead of $u_{\Lambda,\alpha_0}$. Then we choose an $x_\lambda>0$ so that 
$u_{\lambda,\alpha_0}(x) < s_\infty = u_{\lambda,\alpha_0}(x_\lambda)$ 
for every $x \in (x_\lambda,\infty)$. In this case, instead of \eqref{eq:A.4}, 
we consider the equation 
	\[
		\Lambda^{-1} G_\infty(s) 
		= E[ u_{\lambda,\alpha_0} , \Lambda ] (x_\lambda) 
		> E[ u_{\lambda,\alpha_0}, \lambda ] (x_\lambda) = 0
	\]
and this equation has only one solution $s_1 > \alpha_0$  
due to \eqref{eq:2.15}. 
Let us consider $u_{\Lambda,s_1}$. 
By $s_1 > \alpha_0$, we may find a $z_1>0$ satisfying $u_{\Lambda,s_1}(z_1)=0$. 
Thus, choose a $y_1>0$ so that $u_{\Lambda,s_1}(x) > s_\infty = u_{\Lambda,s_1}(y_1)$ 
for all $x \in [0,y_1)$ and set 
	\[
		u(x) := \left\{\begin{aligned}
			& u_{\Lambda,s_1}(x) & &{\rm if} \ 0 \leq x \leq y_1,\\
			& u_{\lambda,\alpha_0} ( x - y_1 + x_\lambda ) & &{\rm if} \ 
			y_1 < x.
		\end{aligned}\right.
	\]
Then as in the above, we can check that $u \in C^2(\R)$ is a solution of \eqref{eq:2.2}, 
decays exponentially and 
$u''(x) < 0 < u''(y)$ for each $|x| < y_1$ and $|y|>y_1$.

	Next, we prove the uniqueness of solutions of \eqref{eq:2.1} and \eqref{eq:2.2}. 
Let $u_1$ be a solution of \eqref{eq:2.1} constructed in the above and 
$u$ any solution of \eqref{eq:2.1}. 
By the sign property of $g_\infty$, 
we deduce that $u(0) \geq s_\infty$. Moreover, notice that 
	\[
		-\cM^+(u'') = g_\infty(u) \quad {\rm in}\ \R \quad 
		\Leftrightarrow \quad 
		- u'' = (\cM^+)^{-1} ( g_\infty(u)  ) \quad {\rm in}\ \R
	\]
where $(\cM^+)^{-1}(s) = \Lambda^{-1} s$ if $s \geq 0$ and 
$(\cM^+)^{-1} (s) = \lambda^{-1} s$ if $s < 0$. 
Since $(\cM^+)^{-1}$ and $f$ are locally Lipschitz continuous, 
the initial value problem 
	\begin{equation}\label{eq:A.5}
		- u'' = (\cM^+)^{-1} ( g_\infty(u)  ) \quad {\rm in}\ \R, 
		\quad (u'(z),u(z)) = (\alpha_1,\alpha_2)
	\end{equation}
has a unique solution $u_{z,\alpha_1,\alpha_2}$ 
for every $z, \alpha_1 \in \R$ and $\alpha_2 > 0$. 
Since $g_\infty(s_\infty) = 0$, if $u(0) = s_\infty$, then 
we infer that $u \equiv u_{0,0,s_\infty} \equiv s_\infty$, 
which contradicts $u(x) \to 0$ as $x \to \infty$. 
Hence, $u(0) > s_\infty$.

	Now choose $z_\Lambda>0$ so that 
$u(z_\Lambda) = s_\infty > u(x)$ for all $x > z_\Lambda$. 
Then $u$ satisfies 
	\[
		-u'' = \Lambda^{-1} g_\infty(u) \quad {\rm in}\ (z_\Lambda,\infty).
	\]
Noting $E[u,\Lambda] (x) \equiv 0$ in $[z_\Lambda,\infty)$ and 
$u(z_\Lambda) = s_\infty$, we have 
	\[
		(u'(z_\Lambda), u(z_\Lambda)) = ( u_{\Lambda,\alpha_0}'(x_\Lambda), 
		u_{\Lambda,\alpha_0}(x_\Lambda) ) 
		= ( u_1'(y_1), u_1(y_1) ). 
	\]
Thus it is easily seen from the construction of $u_1$ and 
the unique solvability of the initial value problem for \eqref{eq:A.5} 
with $z = z_\Lambda$ that 
$u(x)= u_1(x+y_1 - z_\Lambda)$ in $\R$.  
Noting that 
	\[
		u_1(0) = \max_{\R} u_1 > u_1(x) \quad 
		{\rm for}\ x \neq 0, \quad 
		u(0) = \max_{\R} u,
	\]
we deduce that $y_1 = z_{\Lambda}$ and $u_1 \equiv u$. 
Hence, the uniqueness of solutions of \eqref{eq:2.1} holds. 
Similarly, we can prove the uniqueness of solutions of \eqref{eq:2.2}.

	Remark that the above argument can be applied to conclude 
$u \equiv u_1$ if $u$ satisfies $-\cM^\pm(u'') = g_\infty(u)$ in $\R$ 
with $u(0) = \max_{\R} u$, $u>0$ in $\R$ and 
$u(x) \to 0$ as either $x \to \infty$ or $x \to -\infty$. 
Thus we complete the proof. 
	\end{proof}

\end{document}